\newcommand{\SE}{{\mathcal{E}}}
\newcommand{\SD}{{\mathcal{D}}}
\newcommand{\SW}{{\mathcal{W}}}
\newcommand{\SR}{{\mathcal{R}}}
\newcommand{\SF}{{\mathcal{F}}}
\newcommand{\HI}{{\mathcal{H}\mathcal{I}}}
\newcommand{\FHI}{{\mathcal{F}\mathcal{H}\mathcal{I}}}
\newcommand{\Imm}{{\operatorname{\mathcal{I}}}}
\newcommand{\FImm}{{\operatorname{\mathcal{F}\mathcal{I}}}}
\newcommand{\Maps}{\operatorname{\mathcal{M}}\mathfrak{aps}}
\newcommand{\Leg}{{\mathcal{L}eg}}
\newcommand{\FLeg}{{\mathcal{F}\mathcal{L}eg}}
\mathchardef\mhyphen="2D
\newcommand{\Engel}{{\operatorname{\mathfrak{Engel}}}}
\newcommand{\Geiges}{{\operatorname{Geiges}}}
\newcommand{\std}{{\operatorname{std}}}
\newcommand{\lorentz}{{\operatorname{lorentz}}}
\newcommand{\niso}{{\operatorname{n.e.t.}}}
\newcommand{\gen}{{\operatorname{gen}}}
\newcommand{\Mon}{\operatorname{Mon}}
\newcommand{\R}{{\mathbb{R}}}
\newcommand{\Z}{{\mathbb{Z}}}
\newcommand{\NS}{{\mathbb{S}}}
\newcommand{\D}{{\mathbb{D}}}
\newcommand{\PP}{{\mathbb{P}}}
\newcommand{\Op}{{\mathcal{O}p}}
\newcommand{\ST}{{\mathcal{T}}}
\newcommand{\SU}{{\mathcal{U}}}
\newtheorem{lemma}{Lemma}
\newtheorem{proposition}{Proposition}
\newtheorem{theorem}{Theorem}
\newtheorem*{theorem*}{Theorem}
\newtheorem{corollary}{Corollary}
\newtheorem{definition}{Definition}
\theoremstyle{definition}
\newtheorem{remark}{Remark}
\newtheorem{example}{Example}
\begin{document} 

\title{Flexibility for tangent and transverse immersions in Engel manifolds}

\subjclass[2010]{Primary: 58A30.}
\date{\today}

\keywords{Engel structure, $h$--principle, horizontal curve}

\author{\'Alvaro del Pino}
\address{Universidad Aut\'onoma de Madrid and Instituto de Ciencias Matem\'aticas -- CSIC. C. Nicol\'as Cabrera, 13--15, 28049, Madrid, Spain.}
\email{alvaro.delpino@icmat.es}

\author{Francisco Presas}
\address{Instituto de Ciencias Matem\'aticas CSIC-UAM-UC3M-UCM, C. Nicol\'as Cabrera, 13-15, 28049 Madrid, Spain}
\email{fpresas@icmat.es}

\begin{abstract}
In this article we study immersions of the circle that are tangent to an Engel structure $\SD$. We show that a full $h$--principle does exist as soon as one excludes the closed orbits of $\SW$, the kernel of $\SD$. This is sharp: we elaborate on work of Bryant and Hsu to show that curves tangent to $\SW$ often conform additional isolated components that cannot be detected at a formal level. We then show that this is an exceptional phenomenon: if $\SD$ is generic, curves tangent to $\SW$ are not isolated anymore.

We then go on to show that a full $h$--principle holds for immersions transverse to the Engel structure.
\end{abstract}

\maketitle

\section{History of the problem and outline of the paper}

In \cite[p. 84]{Gr}, M. Gromov stated the following result and posed it as an exercise for the reader: the sheaf of \emph{tangent} $C^\infty$--immersions $\mathbb{R} \to (M, \xi)$, with $\xi$ a bracket generating distribution, is microflexible. He then sketched an argument to show that this implies that the sheaf of $C^\infty$--immersions of some given manifold $N^n$ into $M^m$, $n <m$, that are \emph{transverse} to $\xi$ is flexible. 

Loosely speaking, the geometrical intuition behind the microflexibility of tangent immersions is as follows: since $\xi$ is bracket generating, any deformation (not necessarily tangent) of a tangent immersion cam be approximated by integrating directions contained in $\xi$. However, perhaps surprisingly, R. Bryant and L. Hsu proved in \cite{BH} that this cannot possibly hold in the $C^1$--topology: They were able to show that rigid tangent segments -- segments that, relative to their ends, are isolated in the $C^1$ topology -- exist in abundance in maximally non--integrable $2$--distributions in dimension $4$ and onwards.

The aim of this article is twofold. First of all, as we prove in Subsection \ref{ssec:BH} and then further elaborate in Section \ref{sec:shortCurves}, the existence of rigid segments poses a problem for a complete $h$--principle to hold for immersions tangent to an Engel structure $\SD$. We show that, by restricting to those immersions that are somewhere not tangent to $\SW$, this result can be salvaged: this is the content of Theorem \ref{thm:main}. A key ingredient in its proof is Theorem \ref{thm:generic}, where we show that generic families of tangent curves not everywhere tangent to $\SW$ are in general position with respect to $\SW$. In Section \ref{sec:shortCurves}, we discuss deformations of curves tangent to $\SW$ and we show that, if $\SD$ is generic, Theorem \ref{thm:main} can be strengthened to a full $h$--principle at the $\pi_0$ level, see Theorem \ref{thm:main2}.

In Section \ref{sec:transverse}, we show that Gromov's result for transverse maps and immersions holds in the Engel case. Mostly, this follows from a simple remark and a straightforward application of Gromov's methods, as explained in \cite{Gr, EM}, but the case of $3$--dimensional immersions does require for us to apply Theorem \ref{thm:main}.

In both the transverse and the tangent cases, our methods probably generalise to prove analogous statements for higher dimensional Goursat structures.

\textbf{Acknowledgements:} The authors are grateful to T. Vogel for bringing the problem of transverse submanifolds to their attention and for enlightening discussions, to R. Casals for his interest in the development of this paper, and to V. Ginzburg for the many conversations that gave birth to this project. The authors are supported by the Spanish Research Projects SEV--2015--0554, MTM2013--42135, and MTM2015--72876--EXP and a La Caixa--Severo Ochoa grant.

\section{Preliminaries} \label{sec:prelim}

\subsection{Elementary facts about Engel structures}

\begin{definition}
Let $M$ be a smooth closed $4$--manifold and let $\SD \subset TM$ be a smooth $2$--distribution. $\SD$ is said to be \emph{Engel} if it is maximally non--integrable, that is, $\SE = [\SD, \SD]$ is a $3$--distribution and $[\SE, \SE] = TM$. The pair $(M, \SD)$ is said to be an \emph{Engel manifold}. 

$\SE$ is called the associated \emph{even--contact structure}. The line field $\SW \subset \SD$ uniquely defined by $[\SW, \SE] \subset \SE$ is called the \emph{kernel} of the Engel structure. The flag $\SW \subset \SD \subset \SE \subset TM$ is called the \emph{Engel flag} associated to $\SD$. 
\end{definition}

It is not hard to check that $\SE$ is orientable and always comes with a canonical orientation. However, this is not the case for $\SW$, $\SD$ and $TM$. We will then speak of oriented and orientable Engel flags. 

A relevant feature of Engel structures is that they possess a local model:

\begin{proposition}[F. Engel]
Let $(M_i, \SD_i)$, $i=1,2$, be two Engel manifolds. Let $p_i \in M_i$, $i=1,2$. Then there are neighbourhoods $U_i \ni p_i$ and a diffeomorphism $\phi: U_1 \to U_2$ such that $\phi^*\SD_2 = \SD_1$.
\end{proposition}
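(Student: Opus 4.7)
The plan is to normalise the Engel flag $\SW\subset\SD\subset\SE\subset TM$ at each $p_i$ to a fixed local model on $\R^4_{(x,y,z,w)}$: take
\[
\SD_\std \;:=\; \langle\,\partial_w,\; \partial_x + z\,\partial_y + w\,\partial_z\,\rangle,
\]
for which one checks $\SE_\std = \ker(dy - z\,dx)$ and $\SW_\std = \langle\partial_w\rangle$. Producing coordinates $\psi_i:U_i\to\R^4$ near $p_i$ with $(\psi_i)_*\SD_i = \SD_\std$ then yields the desired $\phi$ as $\psi_2^{-1}\circ\psi_1$.

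The first stage normalises the pair $(\SE,\SW)$. Since $[\SW,\SE]\subset\SE$, the line field $\SW$ is the Cauchy characteristic of the even-contact structure $\SE$, and so its flow preserves $\SE$. Pick a $3$-dimensional local transversal $N\ni p$ to $\SW$; the plane field $\xi := \SE\cap TN$ is contact, because $[\SE,\SE]=TM$ forces the restriction of $d\alpha$ to $\xi$ to be non-degenerate for any local defining form $\alpha$ of $\SE$. Darboux's theorem applied to $(N,\xi)$ produces coordinates $(x,y,z)$ near $p$ with $\xi = \ker(dy-z\,dx)$; extending these by the flow of $\SW$ yields a fourth coordinate $w$ in which $\SW = \langle\partial_w\rangle$ and $\SE = \ker(dy-z\,dx)$, matching the model.

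The second stage normalises $\SD$ inside $\SE$. Write $\SD = \langle\partial_w, X\rangle$ with $X$ a local section of $\SE$ pointwise independent of $\partial_w$. The Engel condition $[\SD,\SD]=\SE$ amounts to $[\partial_w,X]=\partial_w X$ being linearly independent of $X$ modulo $\langle\partial_w\rangle$; equivalently, along each $\SW$-leaf $\{(x,y,z)=\mathrm{const}\}$ the class $[X_w]$ traces an immersed arc in $\PP(\SE/\SW)\cong\PP^1$. A smooth, $w$-only change of coordinates $(x,y,z,w)\mapsto(x,y,z,f(x,y,z,w))$ preserves the normalised $(\SE,\SW)$ — since every slice $\{w=\mathrm{const}\}$ is a $\SW$-transversal — and can be chosen to carry this arc to the standard $w\mapsto[\partial_x + z\,\partial_y + w\,\partial_z]$.

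The main obstacle is ensuring that this final reparametrisation varies smoothly in $(x,y,z)$ and respects the initial Darboux slice $N$. An immersed curve in $\PP^1$ admits, up to a choice of initial point and orientation, a unique reparametrisation matching a prescribed model immersed curve; these choices must be organised into a smooth function of $(x,y,z)$. This is handled by fixing in advance a smooth section $N\to\PP(\SE/\SW)|_N$ selecting $[X_0]$ along $N$, and then obtaining $f$ by the implicit function theorem from the transverse intersection of $w\mapsto[X_w]$ with the model arc. All remaining checks are routine verifications that $\SD$ and the model coincide in these final coordinates.
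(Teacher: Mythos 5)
The paper states Engel's normal form as a classical result and gives no proof of its own, so there is nothing to compare step by step; your two--stage strategy (normalise the pair $(\SE,\SW)$ by Darboux on a transversal plus the $\SW$--flow, then normalise the rotation of $\SD$ inside $\SE$ by a fibrewise reparametrisation of the flow coordinate) is the standard argument and is exactly the local picture the paper itself uses later, in the developing--map discussion and in the proofs of Lemmas \ref{lem:transverseModel} and \ref{lem:tangentModel}. Your Stage 1 is correct: $\SW$ is the characteristic line field of $\SE$, the plane field $\xi=\SE\cap TN$ is contact because it is transverse, inside $\SE$, to $\SW=\ker(d\alpha|_{\SE})$, and flowing a Darboux chart of $(N,\xi)$ by $\SW$ gives coordinates with $\SE=\ker(dy-z\,dx)$ and $\SW=\langle\partial_w\rangle$.

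The gap is in Stage 2, which is the actual content of Engel's theorem. A change of the form $(x,y,z,w)\mapsto(x,y,z,f(x,y,z,w))$ only reparametrises the curve $w\mapsto[X_w]\in\PP(\SE/\SW)$; it cannot change its image, so the assertion that ``an immersed curve in $\PP^1$ admits a unique reparametrisation matching a prescribed model immersed curve'' is false as stated, and ``the transverse intersection of $w\mapsto[X_w]$ with the model arc'' is not meaningful (both are arcs in a $1$--manifold), so the implicit--function--theorem step does not produce $f$. What saves the argument is that the model curve $w\mapsto[\partial_x+z\,\partial_y+w\,\partial_z]$ is an embedding onto $\PP^1\setminus\{[\partial_z]\}$, whose inverse is the slope map. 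Concretely: write $X_w\equiv a\,(\partial_x+z\,\partial_y)+b\,\partial_z \bmod \SW$; first arrange, by a contactomorphism of the Darboux chart (a pointwise condition at $p$ only), that $a(p,0)\neq 0$, i.e.\ that $\SD_p\cap T_pN\neq\langle\partial_z\rangle_p$ -- this positioning issue is not addressed in your write--up and is exactly where a blind matching of arcs would fail; then set $f=b/a$ near $(p,0)$. The condition $[\SD,\SD]=\SE$ is precisely $a\,\partial_w b-b\,\partial_w a\neq 0$, i.e.\ $\partial_w f\neq 0$ where $a\neq0$, so $(x,y,z,f)$ is a genuine coordinate system, smooth in all variables, and a direct computation shows it carries $\SD$ to $\langle\partial_w,\ \partial_x+z\,\partial_y+w\,\partial_z\rangle$. (Also, the reason the fibrewise change preserves the normalised $(\SE,\SW)$ is not that the slices are $\SW$--transversals, but that $dy-z\,dx$ involves no $dw$ and $\partial_w$ is mapped to $(\partial_w f)\,\partial_w$.) With these repairs your proof is complete and coincides with the classical one.
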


The following local models will be useful in subsequent discussions:

\begin{example} \label{example:darboux}
Let $M = \R^4$ with coordinates $(x,y,z,t)$. The $2$--distribution $\SD_{\std} = \ker(dy-zdx) \cap \ker(dz-tdx)$ is sometimes called the \emph{standard Engel structure} or the \emph{Darboux model}. It corresponds to the tautological distribution in the jet space $J^2(\R, \R) \cong \R^4$, so $x$ can be thought as a variable, $y$ as a function on $x$, and $z$ and $t$ as the first and second derivatives, respectively.

Here $\SW$ is spanned by $\partial_t$. $\SE = [\SD_{\std},\SD_{\std}]$ induces on every slice $\R^3 \times \{t_0\}$ the contact structure $\ker(dy-zdx)$. \hfill{$\Box$}
\end{example}

\begin{example}
Take $M = \R^4$ with coordinates $(x,y,z,t)$ again. The $2$--distribution 
\[ \SD_{\lorentz} = \ker(dy-tdx) \cap \ker(dz-t^2dx) = \langle \partial_t, \partial_x + t\partial_y + t^2\partial_z \rangle\]
is an Engel structure, which we call the \emph{Lorentzian model}. $\SW$ is spanned by $\partial_x + t\partial_y + t^2\partial_z$.\hfill{$\Box$}
\end{example}

The following construction first appeared in works of Cartan:

\begin{example}
Fix a contact manifold $(M^3, \xi)$. Denote the projectivisation of the bundle $\xi$ as $\pi: \PP(\xi) \to M$. Define the \emph{Cartan contact prolongation} (of one projective turn) as the manifold $\PP(\xi)$ equipped with the $2$--distribution
\[ \SD(\xi)(p,L) = (d\pi)^{-1}(L) \]
where $p$ is a point in $M$, $L$ a line in $\xi_p$, and $(p,L)$ a point in $\PP(\xi)$. $\SD(\xi)$ is an Engel structure with kernel $\SW(\xi) = \ker(d\pi)$. The even--contact structure associated to it is $(d\pi)^{-1}(\xi)$. Observe that the Darboux model can be understood as a Cartan prolongation where $\R^3 \times \{\infty\}$ has been removed. \hfill{$\Box$}
\end{example}

We denote by $\Engel(M)$ the space of Engel structures in $M$ endowed with the $C^2$--topology. By the existence $h$--principle in \cite{CPPP}, Engel structures do exist in $M$ as soon as there are no obvious topological obstructions.

\subsection{Horizontal immersions}

Recall that all submanifolds tangent to an Engel structure have dimension at most 1. This motivates the interest in the following definition:

\begin{definition}
Let $(M, \SD)$ be an Engel manifold. An immersion $\gamma: \NS^1 \to M$ is said to be \emph{horizontal} if $\gamma'(t) \in \SD_{\gamma(t)}$ for all $t \in \NS^1$.

When $\gamma$ is an embedding, we say that it is an \emph{Engel knot} or a \emph{horizontal knot}.
\end{definition}

Following the $h$--principle philosophy, one can understand an immersion as two separate maps, the map itself and its derivative, that are coupled together. Decoupling this relation yields the following definition:

\begin{definition}
Let $(M, \SD)$ be an Engel manifold. A \emph{formal horizontal immersion} is a pair $(\gamma, F)$ satisfying:
\begin{enumerate}
\item $\gamma: \NS^1 \to M$ is a smooth map,
\item $F: \NS^1 \to \SD|_\gamma$ is a non--vanishing section $F(s) \in \SD_{\gamma(s)} \subset T_{\gamma(s)} M$.
\end{enumerate}
\end{definition}

Write $\Imm$ for the space of immersions of $\NS^1$ into $M$, equipped with the $C^1$--topology. Write $\FImm$ for the space of formal immersions (pairs $(\gamma,F)$ with $F$ mapping into $TM|_\gamma$), endowed with the $C^0$--topology. Denote by $\HI(\SD) \subset \Imm$ and $\FHI(\SD) \subset \FImm$ the subspaces of horizontal and formal horizontal immersions, respectively. There is a natural continuous inclusion $\HI(\SD) \to \FHI(\SD)$. Since these definitions make sense as well for immersions of the interval, we define $\Imm([0,1])$, $\FImm([0,1])$, $\HI([0,1],\SD)$, and $\FHI([0,1],\SD)$ analogously.

\begin{remark} \label{rem:formal}
Observe that there is a forgetful map $\FHI(\SD) \to \Maps(\NS^1, M)$. The fibre over $\gamma \in \Maps(\NS^1, M)$ is $\Mon(T\NS^1,\SD|_\gamma)$, where $\Mon$ denotes bundle monomorphisms. In particular, depending on whether $\SD$ is orientable or not over $\gamma$, this corresponds to $\Mon_{\NS^1}(\R,\R^2)$ or $\Mon_{\NS^1}(\R,\R\oplus L)$, with $L$ the non orientable line bundle over the circle. In any case, the map is indeed a locally trivial fibration.

If $\SD$ is orientable and a global framing has been chosen for it, it follows that:
\[ \pi_0(\FHI(\SD)) \cong \pi_0(\Maps(\NS^1, M)) \times \Z \]
where the integer represents the turning of $F$ with respect to the framing. This integer is usually called the {\em rotation} number of the formal immersion. Similarly, take $(\gamma, F) \in \FHI(\SD)$:
\[ \pi_1(\FHI(\SD), (\gamma, F)) \cong \pi_1(\Maps(\NS^1, M, [\gamma])) \times \Z \]
where, if we have a loop $(\gamma_\delta, F_\delta)$, $\delta \in \NS^1$, in $\FHI(\SD)$, the integer represents the turning of $\delta \to F_\delta(0)$ with respect to the framing of $\SD$. Finally:
\[ \pi_j(\FHI(\SD), (\gamma, F)) \cong \pi_j(\Maps(\NS^1, M, [\gamma])) \text{ for } j>1. \]  \hfill{$\Box$}
\end{remark}

The aim of this article is understanding the nature of the inclusion $\HI(\SD) \to \FHI(\SD)$. In Subsection \ref{ssec:BH} we will strengthen a result of Bryant and Hsu to prove that this cannot be a weak homotopy equivalence. Then, we will restrict to a subset of horizontal immersions for which the inclusion into $\FHI(\SD)$ is indeed a weak homotopy equivalence.

\subsection{The developing map}

Let $(M, \SD)$ be an Engel manifold and fix a point $p \in M$. Denote by $\phi$ the germ of an embedding of a disk transverse to $\SW$ and passing through $p$ at the origin. After suitable reparametrisation, $\phi^*\SE$ and $\phi^*\SD$ can be assumed to be the germs $\xi_0 = \ker(dy-zdx)$ and $\langle \partial_z \rangle$, respectively.

If we assume for the Engel flag to be oriented, we can find a vector field $W$ positively spanning $\SW$ everywhere. Denote by $\varphi_q(t)$ its time $t$ flow starting from $q$. Then, any small disk representing $\phi$ can be extended to an immersion $\Psi_p: \D^3 \times \R \to M$ using the equation $\Psi_p(x,y,z,t)=\varphi_{\phi(x,y,z)}(t)$. 

Recall that the flow of $\SW$ preserves $\SE$ and hence the contact structure induced by $\SE$ on every $\Psi_p(\D^3 \times \{t\})$ is precisely $\xi_0$. This implies that $\Psi_p^* \SD|_{\{0\} \times \R}$ is then spanned by the vector fields $\partial_t$ and $X = \cos(f(x,y,z,y))\partial_z + \sin(f(x,y,z,t)) (z\partial_y+\partial_x)$, with $f: \D^3\times \R \to \R$ smooth. By the Engel condition, it holds that $\partial_t f > 0$; by construction we have $f(x,y,z,0) = 0$. 

\begin{definition}
The map 
\begin{eqnarray*}
\hat{p}: \R & \to &\PP(\xi_0) \\
t & \to & \PP(X(\Psi_p(0,t)))
\end{eqnarray*}
is called the developing map at $p$.
\end{definition}

Observe that the developing map is independent of the representative of $\phi$ we choose, but does depend on $W$ and $\phi$. However, its integer part, that measures the number of {\em full turns} (which is often unbounded) is well defined and independent of $W$ and of $\phi$. In particular, since it is well defined at the level of germs, the number of integer turns can be defined even if $\SW$ is not orientable.

If a curve $\gamma:[0,1] \to M$ tangent to $\SW$ is given, one can assign to it its developing map 
\[ \hat{\gamma}:[0,1] \to \R\PP^1 \]
given by the restriction of the developing map of $\gamma(0)$:
\[ \hat{\gamma}(s) = \widehat{\gamma(0)}(\varphi_{\gamma(0)}^{-1}(\gamma(s))) \]
and its number of integer turns does not depend on the explicit parametrisation of $\gamma$ nor on the choice of $\phi$. This can be done even if the Engel flag is not orientable.

Note that we are always referring to the developing map as mapping into the \textsl{projectivisation} of the contact structure. This implies that when we say that it is making a full turn, we mean only half a turn into the oriented \textsl{sphere bundle} of the contact structure.

\subsection{The Geiges projection}

Let us focus on horizontal immersions and embeddings into the standard Engel structure $(\R^4, \SD_{\std})$. The following result was already proven by Adachi \cite{Ada} and Geiges \cite{Ge}:

\begin{proposition} \label{prop:Geiges}
Horizontal knots in $(\R^4, \SD_{\std})$ are classified, up to homotopy, by their rotation number with respect to $\SW$. 
\end{proposition}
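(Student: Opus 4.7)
The plan is to reduce the proposition to the formal classification from Remark~\ref{rem:formal}. Since $\R^4$ is contractible and $\SD_{\std}$ is globally framed by $W = \partial_t$ and $X = \partial_x + z\partial_y + t\partial_z$, Remark~\ref{rem:formal} gives $\pi_0(\FHI(\SD_{\std})) \cong \Z$ via the rotation number. The proposition is therefore equivalent to the assertion that the inclusion $\HI(\SD_{\std}) \hookrightarrow \FHI(\SD_{\std})$ is a bijection on $\pi_0$.

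For surjectivity, for each $n \in \Z$ I would exhibit an explicit horizontal knot with rotation number $n$, built by modulating the $t$-coordinate of a small perturbation of a trivial horizontal loop; the count of the rotation is a direct computation once the Engel conditions $y' = zx'$, $z' = tx'$ are integrated.

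For injectivity the main tool is the \emph{Geiges projection} $\pi\colon (x,y,z,t) \mapsto (x,t)$. Writing $\gamma'(s) = t'(s)\,W + x'(s)\,X$ for a horizontal $\gamma$, the immersion condition $\gamma' \neq 0$ becomes $(x', t') \neq 0$, so $\bar\gamma := \pi\circ\gamma$ is a smooth immersion $\NS^1 \to \R^2$ whose Whitney--Graustein turning number equals the rotation number of $\gamma$. Conversely, given a planar immersion $\bar\gamma = (x,t)$ and basepoints $(y_0,z_0)$, the Engel conditions determine $z$ and $y$ by iterated integration, and their periodicity translates into the two \emph{closing conditions}
\[ \oint t\, dx = 0, \qquad \oint z\, dx = 0. \]
Injectivity then reduces to the following planar problem: given two immersions $\bar\gamma_0, \bar\gamma_1 \colon \NS^1 \to \R^2$ both satisfying the closing conditions and having equal turning number, construct a regular homotopy between them along which both closing conditions remain satisfied at every time. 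Whitney--Graustein furnishes a regular homotopy $\bar\gamma_\tau$ ignoring the constraints, which I would then correct by inserting, continuously in $\tau$, small compactly supported paired deformations designed to cancel both integrals simultaneously.

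The main obstacle is precisely this correction step. Its feasibility rests on two observations: the closing conditions amount to only two linear functionals, while the space of admissible deformations is infinite-dimensional; and bumps supported in disjoint regions of the plane where $x$ takes distinct typical values yield linearly independent contributions to the pair $\bigl(\oint t\,dx,\, \oint z\,dx\bigr)$. A standard parametric/partition-of-unity argument, or equivalently a short convex-integration step performed along $\bar\gamma_\tau$, then produces a smooth correction of the regular homotopy, which lifts to the desired path of horizontal immersions in $(\R^4, \SD_{\std})$.
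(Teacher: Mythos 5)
Your immersion--level argument is essentially workable, and it takes a genuinely different route from the paper's. The paper (following Geiges) projects to $\R^3$ via $\pi_\Geiges(x,y,z,t)=(x,z,t)$, invokes the $h$--principle for Legendrian immersions of $\ker(dz-tdx)$, and then adjusts the single remaining constraint $\oint z\,dx=0$; you project all the way to the $(x,t)$--plane (note: this is \emph{not} what the paper calls the Geiges projection), replace the Legendrian $h$--principle by Whitney--Graustein, and must then control the two closing conditions $\oint t\,dx=0$ and $\oint z\,dx=0$ along the planar regular homotopy. That trade is legitimate in this global setting: for fixed $x(s)$ both functionals are linear in $t(s)$, so pushing the strand in the $t$--direction over two parameter windows where $x'\neq 0$ gives a $2\times 2$ linear effect map that is invertible for suitably placed windows, the needed correction vanishes at $\tau=0,1$, and since the ambient space is all of $\R^4$ there is no bound on slopes or areas (this is exactly the point of the paper's remark that Geiges' area adjustment uses the whole of $\R^4$ and does not survive in a Darboux ball). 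One caution: perform the correction by finger/bump moves, not by inserting small loops --- a kink changes the turning number and cannot be switched on continuously through immersions.

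The genuine gap is embeddedness. The proposition is about horizontal \emph{knots}, and your opening reduction, ``the proposition is equivalent to $\pi_0(\HI(\SD_{\std}))\to\pi_0(\FHI(\SD_{\std}))$ being a bijection,'' silently replaces isotopy through horizontal embeddings by homotopy through horizontal immersions. The paper's proof has a final step that your proposal omits entirely: a genericity argument for areas showing that self--intersections can be avoided along the connecting family, ``giving the result for embeddings.'' This is not automatic, because the perturbations used to remove double points must stay inside the constrained class (horizontal, with both closing conditions), so the usual ``curves in a $4$--manifold generically don't self--intersect in a $1$--parameter family'' argument cannot be applied directly to $\gamma_\tau$. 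Concretely, a self--intersection upstairs corresponds to a double point $\bar\gamma_\tau(s_1)=\bar\gamma_\tau(s_2)$ of the planar curve at which, in addition, $\int_{s_1}^{s_2} t\,dx=0$ and $\int_{s_1}^{s_2} z\,dx=0$; double points of a generic $1$--parameter planar family form $1$--dimensional strata in $(\tau,s_1,s_2)$--space, and one must argue that a further small perturbation \emph{of the areas}, within the constrained class, kills the two extra conditions and hence all intersections. Without this step you have proved the classification of horizontal immersions up to horizontal regular homotopy, which is weaker than the stated classification of horizontal knots (and than the Adachi--Geiges theorems the proposition refers to).
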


First of all, observe that in dimension $4$ there is no smooth knotting of $\NS^1$ and, mimicking the discussion in Remark \ref{rem:formal}, the only formal invariant is precisely the rotation number. 

Let us now outline Geiges' approach; note that our naming convention for the variables is different from his. Take coordinates $(x,y,z,t)$ in $\R^4$, $\SD_{\std}$ is given as the kernel of the $1$--forms $\alpha = dy-zdx$ and $\beta = dz-tdx$. We say that the map $\pi_\Geiges: (x,y,z,t) \to (x,z,t)$ is the \textsl{Geiges projection}. Horizontal immersions are clearly tranverse to the projection direction and, therefore, the Geiges projection takes horizontal immersions to Legendrian immersions for $\ker(\beta)$. Its front projection $(x,z,t) \to (x,z)$ is the lagrangian projection for $(\R^3(x,y,z), \ker(\alpha))$. In particular, the image of an horizontal immersion $\gamma: \NS^1 \to \R^4$ under the Geiges projection is a legendrian immersion that satisfies the additional area constraint $\int_\gamma zdx = 0$. The rotation number of $\gamma$ agrees precisely with the rotation of its Geiges projection. 

The key observation is that, given two horizontal immersions, we can use the $h$--principle for legendrian immersions first to find a homotopy between their Geiges projections, and then adjust areas along the homotopy to ensure the area constraint in the family. This gives the result for immersions. A genericity argument for areas shows that one can avoid self--intersections of the curves, giving the result for embeddings.

\begin{remark}
In Geiges' argument, one uses the fact that the knots live in $\R^4$ to perform the adjustment of areas (guaranteeing that the knots in the Geiges projection do lift to actual horizontal curves). Since Engel Darboux balls are ``smaller'' than a full $\R^4$, Theorem \ref{thm:main} does not follow immediately from his arguments.

On the other hand, it is easy to see that the genericity argument used to prove Proposition \ref{prop:Geiges} does not work in the classification of $1$--parametric families of embeddings anymore: self--intersections generically appear conforming a codimension--$2$ stratum within a family.
\end{remark}

\subsection{Local models and deformations of tangent curves} \label{ssec:models}

Let $(M, \SD)$ be an Engel manifold and let $\gamma: [-1,1] \to M$ be an horizontal immersion with $\nu(\gamma)$ a small tubular neighbourhood. Consider the following problem: to describe the deformations of $\gamma$, relative to its ends, as an horizontal curve.

\begin{lemma} \label{lem:transverseModel}
Let $\gamma$ be horizontal and everywhere transverse to $\SW$. Then, there is a map:
\[ \phi: (\nu(\gamma), \SD) \to (\R^4, \SD_{\std}) \]
which is an Engel isomorphism with its image and that satisfies  $(x,0,0,0) = \phi \circ \gamma(x)$.

In particular, all $C^1$--perturbations of $\gamma$ are, in the model, of the form $(x,y(x),y'(x),y''(x))$, with $y(x)$ vanishing to order three at the boundary points. 
\end{lemma}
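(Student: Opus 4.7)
The plan is to build the normal form in two halves: first construct a contact model on a $3$-dimensional transversal to $\SW$ containing $\gamma$, then extend by flowing along $\SW$. Because $\gamma$ is transverse to $\SW$, I can thicken it to a $3$-dimensional submanifold $\Sigma \subset M$ containing $\gamma$ and everywhere transverse to $\SW$. Standard Engel geometry yields that $\xi_\Sigma := \SE \cap T\Sigma$ is a contact structure (the pairing $\SE/\SW \times \SE/\SW \to TM/\SE$ induced by the Lie bracket is non-degenerate, and its pullback to $T\Sigma$ supplies a contact form). A dimension count shows that $L_\Sigma := \SD \cap T\Sigma$ is a line field; it is Legendrian in $\xi_\Sigma$ and contains $\dot\gamma$ along $\gamma$. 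By the Legendrian neighbourhood theorem, there is a contactomorphism
\[\phi_0 : (\nu_\Sigma(\gamma), \xi_\Sigma) \longrightarrow (\nu_{\R^3}(x\text{-axis}), \ker(dy-zdx))\]
sending $\gamma(x)$ to $(x,0,0)$. Because the space of Legendrian line fields extending the tangent line along a Legendrian curve is contractible, I can additionally arrange $(\phi_0)_*L_\Sigma = \langle \partial_x + z\partial_y\rangle$, the Legendrian line field carried by the slice $\{t=0\} \subset (\R^4, \SD_{\std})$.

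Next I extend $\phi_0$ to a full neighbourhood of $\gamma$ in $M$ by flowing along $\SW$. Fix a vector field $W$ spanning $\SW$ with flow $\varphi^W$, and define
\[\phi(\varphi^W_s(p)) \;=\; \phi_0(p) + \tau(p, s)\,\partial_t, \qquad p \in \Sigma,\]
for a reparametrisation $\tau$ still to be chosen. Because $\SE$ is invariant under the flow of $\SW$ (and $\SE_{\std}$ under $\partial_t$), and both match on $\Sigma$, the map $\phi$ automatically sends $\SE$ to $\SE_{\std}$ and $\SW$ to $\langle \partial_t\rangle$. What remains is the condition $\phi_*\SD = \SD_{\std}$: inside $\SE$, the flag line $\SD$ rotates along $\SW$ at a rate measured by the developing map of the preceding subsection. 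Choose $\tau$ so that the developing function of $W$ at each point of $\gamma$ (read through $\phi$) equals the developing function of $\partial_t$ at the corresponding point of the $x$-axis. By the positivity $\partial_s f > 0$ enforced by the Engel condition, this ODE has a unique smooth solution with $\tau(p, 0) = 0$, and $\tau$ is a local diffeomorphism in $s$. This upgrades $\phi$ to an Engel isomorphism onto its image.

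For the perturbation statement, a horizontal curve in the model is tangent to $\SD_{\std} = \ker(dy-zdx)\cap\ker(dz-tdx)$. Any $C^1$-small perturbation of $(x,0,0,0)$ still has the $x$-coordinate as a regular parameter, so can be rewritten as $(x, y(x), z(x), t(x))$; tangency to $\SD_{\std}$ then forces $z = y'$ and $t = y''$, yielding the form $(x, y(x), y'(x), y''(x))$. Matching $(x,0,0,0)$ at the endpoints $x = \pm 1$ amounts to $y(\pm 1) = y'(\pm 1) = y''(\pm 1) = 0$, i.e.\ a zero of order three.

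The main obstacle is the extension step: one must synchronise the developing data of $\SW$ in $M$ with that of $\partial_t$ in the model along the entire curve $\gamma$, not merely pointwise. The developing-map formalism introduced earlier is tailored precisely for this, and the positivity condition there is exactly what guarantees that the reparametrisation $\tau$ is a well-defined diffeomorphism; the Legendrian Darboux step, by contrast, is standard once the transversal $\Sigma$ has been chosen.
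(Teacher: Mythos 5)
Your overall architecture coincides with the paper's: take a $3$--dimensional transversal $\Sigma$ to $\SW$ containing $\gamma$, observe that $\SE\cap T\Sigma$ is contact with $\gamma$ Legendrian, normalise by a Legendrian neighbourhood theorem, align the line field induced by $\SD$ on the slice with $\langle\partial_x+z\partial_y\rangle$, and finally flow by $\SW$ with a time reparametrisation (your $\tau$, the paper's ``adjusting the length of $W$'') to match the twisting of $\SD$ inside $\SE$. The final step and the jet--space description of the perturbations are fine. The gap is in the alignment step, which is where the actual geometric content of the lemma sits. Contractibility of the space of Legendrian line fields extending $\langle\dot\gamma\rangle$ is a homotopy--theoretic statement about sections of a bundle; it does not produce a contactomorphism of $\Op(\gamma)\subset\Sigma$ carrying $L_\Sigma$ to $\langle\partial_x+z\partial_y\rangle$, and for an \emph{arbitrary} thickening $\Sigma$ no such contactomorphism need exist. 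Indeed, the germ along $\gamma$ of the pair $(\xi_\Sigma,L_\Sigma)$ carries an integer invariant: in a foliation box around the compact leaf segment $\gamma$, project to the local leaf space $Q$ and record the arc $x\mapsto d\pi\bigl(\xi_{\Sigma,\gamma(x)}\bigr)$ in $\PP(T_{q_0}Q)\cong\R\PP^1$; a diffeomorphism matching contact structure, foliation and curve transforms this arc by a single element of $\PGL(2,\R)$, so the number of full projective turns is preserved. For the model $\langle\partial_x+z\partial_y\rangle$ along any segment of the $x$--axis the arc is embedded (slopes $-x$, never vertical), i.e.\ less than one turn. But one can exhibit an Engel manifold, a transverse horizontal $\gamma$, and a perfectly legitimate transversal for which the winding is large: on $\R^4(x_1,x_2,\theta,t)$ take $\SD=\langle\partial_t,\ \cos t\,\partial_\theta+\sin t\,(\cos\theta\,\partial_{x_1}+\sin\theta\,\partial_{x_2})\rangle$, so $\SW=\langle\partial_t\rangle$, let $\gamma$ be a long $\theta$--segment on the axis and $\Sigma=\{t=0\}$; then $L_\Sigma=\langle\partial_\theta\rangle$ and the arc above is $\langle(\cos\theta,\sin\theta)\rangle$, which wraps as many times as the length of the segment allows. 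So your $\phi_0$ cannot be upgraded as claimed for this $\Sigma$.

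The repair is to use the freedom in the transversal itself rather than a contactomorphism of a fixed one: tilting $\Sigma$ in the $\SW$--direction away from $\gamma$ changes $\SD\cap T\Sigma$ off $\gamma$ (this is exactly what the paper's ``wiggle $\psi$ slightly and restrict to a smaller tubular neighbourhood'' accomplishes), and one must check that a transversal with the correct induced line field exists. Alternatively, one can bypass slices entirely: locally quotient a thin tube around $\gamma$ by $\SW$ to obtain a contact $3$--manifold $(N,\xi_N)$, note that the tube develops, via $p\mapsto(\pi_N(p),d\pi_N(\SD_p))$, into the Cartan prolongation $\PP(\xi_N)$ with $\gamma$ mapping to the Legendrian lift of the embedded Legendrian arc $\pi_N\circ\gamma$, and then transport the standard model through the Legendrian neighbourhood theorem applied downstairs in $N$; this produces the required Engel isomorphism with the prescribed parametrisation without ever having to match line fields on a preassigned slice.
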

\begin{proof}
Take two linearly independent vector fields $Y,Z$ along $\gamma$ such that $\langle \gamma', Y, Z \rangle$ is a $3$--distribution complementary to $\SW$. Use the exponential map on $\langle Y,Z\rangle$ along $\gamma$ to find an embedding $\psi: [-1, 1] \times \D^2_\varepsilon \to M$; since $\psi^*\SE$ is a contact structure on this slice, with $\psi^{-1} \circ \gamma$ a legendrian curve, we can take a small tubular neighbourhood of $\gamma$ within the slice that is contactomorphic to $([-1,1] \times \D^2, \ker(dy-zdx))$ with $\gamma$ corresponding to $y=z=0$.

The legendrian line field determined on the slice by $\psi^*\SD$ is not spanned by $\partial_x + z \partial_y$ necessarily. However, since this is the case along $\gamma$, we can wiggle $\psi$ slightly and restrict to a smaller tubular neighbourhood to achieve it. Now we take $W$ spanning $\SW$ and flow the slice. Adjusting the length of $W$ appropriately yields the desired model. 

The claim about the deformations follows from the fact that this is precisely the structure in the jet space $J^2(\R, \R)$ along the zero section.
\end{proof}

At the other side of the spectrum, we have the following:
\begin{lemma} \label{lem:tangentModel}
Let $\gamma$ be an integral curve of $\SW$ with developing map less than a turn. Then there is an Engel embedding:
\[ \phi: (\nu(\gamma), \SD) \to (\R^4, \SD_{\lorentz})  \]
Where $\phi\circ\gamma(s) = (x(s),0,0,0)$, with $x: [-1, 1] \to \R$ a diffeomorphism with its image satisfying $x(0) = 0$.

Any $C^1$--perturbation of $\gamma$ is given as $(x(s),y(x(s)),z(x(s)),t(x(s)))$ with 
\[ y(x) = y(0) + \int_0^x t(s) ds \]
\[ z(x) = z(0) + \int_0^x t^2(s) ds \]
In particular, the expression for $z$ implies that there are no deformations relative to the ends.
\end{lemma}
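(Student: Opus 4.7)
The plan is four steps: (i) put $\SD$ in a flow-box normal form along $\gamma$, (ii) reparametrize to match the Lorentzian angular function along $\gamma$, (iii) upgrade this to an Engel isomorphism on a tubular neighbourhood via a Moser argument, and (iv) read off the deformation description from the explicit Lorentzian formula.

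For step (i), I pick a germ of a $3$-disk $\phi:(\D^3, 0) \hookrightarrow (M,\gamma(0))$ transverse to $\SW$, apply contact Darboux so that $\phi^*\SE = \ker(dy - z\,dx)$ with $\phi^*\SD(0) = \langle \partial_z \rangle$, and flow by a positive spanning section $W$ of $\SW$ to obtain a diffeomorphism $\Psi:\D^3 \times (-T, T) \to \nu(\gamma) \subset M$ taking $\{0\} \times (-T,T)$ to $\gamma$. Because the flow of $W$ preserves $\SE$, each slice $\{t = t_0\}$ inherits the same contact structure $\ker(dy - z\,dx)$, and so $\Psi^*\SD = \langle \partial_t, X\rangle$ with $X = \cos(f)\partial_z + \sin(f)(z\partial_y + \partial_x)$ for some $f:\D^3 \times (-T,T) \to \R$ satisfying $f(\cdot,\cdot,\cdot,0) = 0$ and $\partial_t f > 0$, exactly as in the general discussion of the developing map.

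For step (ii), an analogous flow-box computation for the $x$-axis inside the Lorentzian model yields a universal angular function $f_{\lorentz}(t)$ depending only on $t$, whose image as $t$ ranges over $\R$ sweeps an interval of length exactly $\pi$ in $\PP^1 \cong \R/\pi\Z$ — one full turn. Since by hypothesis $\gamma$'s developing map has image of length strictly less than $\pi$, a diffeomorphic reparametrization of the $t$-coordinate (chosen so that $\partial_t f > 0$ is preserved) makes $f|_\gamma$ coincide with the restriction of $f_{\lorentz}$ to a strict sub-interval. For step (iii), I then invoke the following rigidity statement: two Engel structures on a tubular neighbourhood of an integral curve of $\SW$ that share the same slice contact structure $\ker(dy - z\,dx)$ and whose angular functions agree along the curve are Engel-isomorphic via a diffeomorphism fixing the curve pointwise. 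This is proved by a Moser argument applied to the linear interpolation $f_s = (1-s)f + sf_{\lorentz}$; positivity $\partial_t f_s > 0$ is preserved because both endpoints have positive $t$-derivative. The generating time-dependent vector field is a contact Hamiltonian on each slice whose Hamiltonian vanishes along $\gamma$, and hence integrates on a sufficiently small tubular neighbourhood to the desired Engel diffeomorphism. The main obstacle is controlling the size of this neighbourhood uniformly in $s$; this is handled by the vanishing of the Hamiltonian along $\gamma$ combined with the uniform strict positivity of $\partial_t f_s$.

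Once the Engel embedding $\phi:(\nu(\gamma), \SD) \to (\R^4, \SD_{\lorentz})$ with $\phi\circ\gamma(s) = (x(s),0,0,0)$ is in place, the deformation description follows directly from $\SD_{\lorentz} = \langle \partial_t, \partial_x + t\partial_y + t^2\partial_z\rangle$. A $C^1$-close perturbation has tangent vector close to $\partial_x$, so $x' > 0$ and we may reparameterize by $x$; tangency to $\SD_{\lorentz}$ then immediately forces $dy/dx = t$ and $dz/dx = t^2$, yielding the stated integral formulas. The rigidity conclusion is then immediate: a perturbation sharing endpoints with $\gamma$ satisfies $z(x_1) - z(x_0) = \int_{x_0}^{x_1} t(u)^2\,du = 0$, forcing $t \equiv 0$ and in turn $y \equiv 0$, so the perturbation must coincide with $\gamma$ itself.
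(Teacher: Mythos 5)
Your overall strategy (flow-box along $\gamma$ via the $\SW$--flow, matching of the angular function with that of the Lorentzian model, and reading off the deformations from the explicit formulas) is close in spirit to the paper's, and your final paragraph deriving the integral formulas and the rigidity relative to the ends is correct. The paper is more explicit at the matching step: it introduces the change of coordinates $\Psi(x,y,z,t)=(x,y+tx,z+t^2x,t)$, which identifies $(\R^4,\SD_{\lorentz})$ with the one-projective-turn prolongation model $\ker(dz+xdt)\cap\ker(dy-2tdz)$, and then produces the embedding directly by parametrising a transverse disk through $\gamma(0)$ and reparametrising the $\SW$--flow, rather than by an interpolation argument.

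There is, however, a genuine gap in your step (ii). A diffeomorphic reparametrisation of the $t$--coordinate changes how the interval of angles is traversed, but it cannot change the \emph{image} of the angular function. With the normalisation of your step (i) one has $f(\gamma(0))=0$ and $f$ strictly increasing along $\gamma$, so the image is a compact interval $[f(\gamma(-1)),f(\gamma(1))]\ni 0$ of length $<\pi$; the image of the model's angular function (normalised at the origin) is the \emph{open} interval $(-\pi/2,\pi/2)$, and nothing in the hypothesis prevents, say, $f(\gamma(1))>\pi/2$. In that case no time reparametrisation makes $f|_\gamma$ a restriction of $f_{\lorentz}$, and no embedding with $\gamma(0)\mapsto 0$ exists \emph{for that choice of transverse disk parametrisation}. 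The missing ingredient is the freedom in choosing the disk: contactomorphism germs of the slice preserving the normalised Legendrian line field act on $\PP(\xi_0)$ by projective transformations fixing the initial direction, and since the total turning is less than one projective turn this action lets you move the image of $f|_\gamma$ into $(-\pi/2,\pi/2)$ --- consistently with the paper's remark that only the integer number of turns of the developing map is intrinsic. Secondarily, in step (iii) the Moser field is misidentified: any Engel isomorphism between the interpolated structures $\langle\partial_t,\,\cos(f_s)\partial_z+\sin(f_s)(z\partial_y+\partial_x)\rangle$ must preserve the common kernel $\langle\partial_t\rangle$, so the generating field must point along $\partial_t$, explicitly $V_s=-\frac{f_1-f_0}{\partial_t f_s}\,\partial_t$, which vanishes along $\gamma$; a slice-tangent contact Hamiltonian field with $t$--dependent Hamiltonians would not carry $\langle\partial_t\rangle$ to itself. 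With the correct field your interpolation does close up, but it then amounts to the direct reparametrisation of the $\SW$--flow that the paper's proof uses.
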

\begin{proof}
By using the flow of $\partial_x + t\partial_y + t^2\partial_z$, we can obtain the following change of coordinates:
\[ \Psi: \R^4 \to \R^4 \]
\[ \Psi(x,y,z,t) = (x, y+ tx, z+t^2x, t) \]
It is easy to check that it satisfies:
\[ \Psi^*(\SD_{\lorentz} = \ker(dy-tdx) \cap \ker(dz-t^2dx)) = \SD_0 = \ker(dz + xdt) \cap \ker(dy -2tdz) \] 
which can be thought as a Cartan prolongation of one projective turn of the contact structure $\ker(dy -2tdz)$ with the slice at infinity removed.

Since $\gamma$ is an integral curve of $\SW$, there is a map:
\[ \Phi: (\nu(\gamma), \SD) \to (\R^4, \SD_0) \]
taking $\gamma$ to some interval of the line $\R \times \{0\}$. This follows by first finding some transverse disk to $\SW$ passing through $\gamma(0)$ and parametrising it so that the contact structure induced by $\SE$ is precisely $\ker(dy -2tdz)$ and then using the flow of a vector field spanning $\SW$. The desired map is $\Psi\circ\Phi$, from the construction it is guaranteed that $x(0) = 0$. The claim regarding deformations is immediate from the model.
\end{proof}

\begin{remark}
It follows from their proofs that Lemmas \ref{lem:transverseModel} and \ref{lem:tangentModel} hold parametrically.
\end{remark}

\subsection{A result of Bryant and Hsu} \label{ssec:BH}

According to Lemma \ref{lem:tangentModel}, sufficiently short integral curves of $\SW$ are isolated in the $C^1$--topology. This phenomenon was fully characterised by Bryant and Hsu:
\begin{proposition}[Proposition 3.1 in \cite{BH}] \label{prop:BH}
Let $(M, \SD)$ be an Engel manifold. Then, an horizontal immersion $\gamma: [0,1] \to M$ is isolated if and only if it is everywhere tangent to $\SW$ and its associated developing map is injective away from its endpoints.
\end{proposition}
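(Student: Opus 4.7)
The statement is an equivalence, which I split into two directions.

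\emph{Necessity} $(\Rightarrow)$. Assume $\gamma$ is isolated. First I show $\gamma$ must be everywhere tangent to $\SW$: if $\gamma'(s_0) \notin \SW_{\gamma(s_0)}$ at some interior point, Lemma \ref{lem:transverseModel} yields a Darboux chart near $s_0$ in which $\gamma$ is the zero section of $J^2(\R,\R)$. Any smooth bump $y(x)$ supported in a small $x$-interval and vanishing to order three at its boundary defines a nontrivial $C^1$-small horizontal perturbation $(x,y(x),y'(x),y''(x))$ with the same endpoints, contradicting isolation. Hence $\gamma$ is an integral curve of $\SW$. To show $\hat\gamma|_{(0,1)}$ is injective, suppose for contradiction that $\hat\gamma(s_1)=\hat\gamma(s_2)=L$ for some $0<s_1<s_2<1$. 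Work in a local Cartan prolongation chart: a germ of $\PP(\xi)$ over a contact $3$-manifold $(N,\xi)$, with $\gamma$ lying in the projective fiber $\PP(\xi_q)$ over a single point $q$. I modify $\gamma$ on $[s_1-\epsilon,s_2+\epsilon]$ by replacing it with the Cartan lift $s\mapsto [\ell'(s)]$ of a small Legendrian loop $\ell$ in $N$ based at $q$, whose tangent lines at the two boundary times of the interval match $\hat\gamma(s_1-\epsilon)$ and $\hat\gamma(s_2+\epsilon)$. Since both of these lines are arbitrarily close to $L$ by continuity, the flexibility of Legendrians with prescribed 1-jet boundary data supplies such an $\ell$ of arbitrarily small $C^1$-size, producing a nontrivial $C^1$-small perturbation.

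\emph{Sufficiency} $(\Leftarrow)$. Now assume $\gamma$ is an integral curve of $\SW$ with $\hat\gamma|_{(0,1)}$ injective. Let $\tilde\gamma$ be a $C^1$-small horizontal perturbation agreeing with $\gamma$ at the endpoints. Partition $[0,1]$ into subintervals so that on each piece $\hat\gamma$ makes less than one full projective turn, and cover by Lorentz charts via Lemma \ref{lem:tangentModel}. On each such chart $\tilde\gamma$ has the form $(x,y(x),z(x),t(x))$ with $y'=t$ and $z'=t^2\geq 0$, so the local $z$-coordinate is monotone non-decreasing. Using the injectivity of $\hat\gamma$, these local monotonicities assemble into a single globally monotone functional along $\gamma$, morally counting the winding of the developing map of $\tilde\gamma$ relative to that of $\gamma$. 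The endpoint condition $\tilde\gamma(0)=\gamma(0),\ \tilde\gamma(1)=\gamma(1)$ forces the total increment of this functional to vanish, whence $t\equiv 0$ chart-by-chart and $\tilde\gamma=\gamma$.

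\emph{Main obstacle.} The bottleneck is the gluing step in sufficiency: each Lorentz chart yields its own $z$-coordinate, with no a priori relation to the others. The point is to use the injectivity of $\hat\gamma$ to piece these chart-dependent monotonicities into a global invariant with a geometric interpretation via the developing map, so that the globally-imposed endpoint constraint (rather than chart-by-chart matching, which we do not have) suffices to annihilate it. The necessity argument is more transparent once one decides to build the perturbation as the Cartan lift of a Legendrian loop; the only subtle point is the prescription of boundary $1$-jets for $\ell$, which is a standard consequence of Legendrian flexibility in dimension three.
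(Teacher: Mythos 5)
First, note that the paper does not prove this statement: it is quoted verbatim from Bryant--Hsu (Proposition 3.1 of \cite{BH}), and the paper only supplies the surrounding machinery (Lemmas \ref{lem:transverseModel} and \ref{lem:tangentModel}, and the front/area analysis of Section \ref{sec:shortCurves}) that a proof would use. Your first step in the necessity direction is fine and is exactly Lemma \ref{lem:transverseModel}: a point of transversality to $\SW$ gives a jet-space model with an obvious infinite-dimensional space of compactly supported perturbations. The second step of necessity, however, has a real gap. To contradict isolation you must produce horizontal curves that are $C^1$-close to $\gamma$, and for a curve lying in a fibre of the Cartan prolongation this means the projection to the contact base must have small speed while its tangent line rotates monotonically at essentially the same schedule as $\hat\gamma$, and in local Darboux coordinates the Lagrangian projection of the loop must in addition enclose zero signed area so that it closes up in the base. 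The Legendrian $h$-principle you invoke gives none of this: it produces a Legendrian with the prescribed formal boundary data and $C^0$-control on positions, but with no control on how the tangent line rotates (the output typically zigzags), so its Cartan lift is not even $C^0$-close to $\gamma$ in the fibre direction, let alone $C^1$-close. What makes the construction possible is precisely that the turning exceeds one projective turn, which leaves room for an explicit small front loop with monotone slope, matching boundary slopes, and vanishing signed area (this is the mechanism behind the examples and Figure \ref{fig:deformation} in Section \ref{sec:shortCurves}); that explicit construction, not Legendrian flexibility, is the missing content. There is also a $C^1$-gluing issue at the junction times, where $\gamma$ moves purely in the fibre but a lift of an immersed Legendrian has nonzero horizontal velocity; this is handled by allowing cusps of the front at the junctions, i.e.\ by working with general horizontal curves rather than literal Cartan lifts of immersed loops.

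The sufficiency direction has a gap at exactly the hard point, which you yourself flag. If the developing map makes strictly less than one full projective turn, a single chart from Lemma \ref{lem:tangentModel} already gives isolation ($z' = t^2 \ge 0$ plus fixed endpoints forces $t \equiv 0$), and no gluing is needed. The genuinely delicate case allowed by ``injective away from the endpoints'' is exactly one full turn, where no single Lorentzian chart covers $\gamma$, and there the ``globally monotone functional'' you hope to assemble from the chart-wise $z$-coordinates does not exist in any evident sense: the chart transitions do not respect these coordinates, and the correct global quantity is the signed area of a front of the perturbation, which is not monotone along the curve. Establishing that this area must be positive for a nonconstant perturbation is the substance of the argument; compare the alternate proof of Proposition \ref{prop:BH2} in Section \ref{sec:shortCurves}, which needs the full convexity/cusp-counting induction (enlarging cusps, locating a Reidemeister~I configuration, removing it, and inducting on the number of cusps) to handle the closed one-turn case, and whose interval analogue is what your sufficiency step would have to reproduce. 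As it stands, your plan proves sufficiency only for turning strictly less than one turn and necessity only at points of transversality, so both halves need the front/area analysis to be completed.
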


We can slightly extend their result to actually show that curves tangent to $\SW$ do provide a counter--example to the $h$--principle for horizontal immersions. This shows that the results presented in Section \ref{sec:main} are sharp.

\begin{proposition} \label{prop:BH2}
In a Cartan prolongation $(\PP(\xi), \SD(\xi))$ of $1$ projective turn, the embedded curves tangent to $\SW(\xi)$ are isolated, in the $C^1$--topology, inside the space of horizontal immersions. They conform two connected components that deformation retract to $\PP(\xi)$.
\end{proposition}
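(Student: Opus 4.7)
The plan is to first identify the $\SW(\xi)$-tangent embedded horizontal circles with parametrized fibers of $\pi$, then establish that they form an open (and clearly closed) subset of $\HI(\SD(\xi))$, and finally describe the resulting component structure.

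Since $\SW(\xi) = \ker d\pi$ by construction, the integral curves of $\SW(\xi)$ are exactly the fibers of $\pi: \PP(\xi) \to M$, each of which is an $\R\PP^1$. An embedded horizontal $\gamma: \NS^1 \to \PP(\xi)$ everywhere tangent to $\SW(\xi)$ is therefore a diffeomorphism of $\NS^1$ onto some fiber $\pi^{-1}(p)$. Writing $\mathcal{X} \subset \HI(\SD(\xi))$ for the resulting subspace, closedness in $\HI$ is evident, and orientation of the parametrization gives a natural decomposition $\mathcal{X} = \mathcal{X}^+ \sqcup \mathcal{X}^-$.

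The central step is openness of $\mathcal{X}$. Fix $\gamma_0 \in \mathcal{X}$ parametrizing $\pi^{-1}(p_0)$ and work in a Darboux chart of $\xi$ around $p_0$, in which the Cartan prolongation trivialises as $\R^3 \times \R\PP^1$ with coordinates $(x, y, z, \phi)$ and
\[ \SD(\xi) = \langle \partial_\phi, \; \partial_x + \phi\partial_y + y\partial_z \rangle, \qquad \SW(\xi) = \langle \partial_\phi \rangle. \]
Writing $\gamma = (x, y, z, \phi)(t)$, horizontality reads $y' = \phi x'$ and $z' = y x'$, so $\pi\circ\gamma$ is a closed Legendrian curve in $(\R^3, \ker(dz - y dx))$, and at every $t$ where $(\pi\gamma)'(t)\neq 0$ the coordinate $\phi(t)$ is exactly the tangent direction of $\pi\circ\gamma$ inside $\R\PP(\xi)$. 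For $\gamma$ sufficiently $C^1$-close to $\gamma_0$, its image lies in this chart and the $\R\PP^1$-winding of $\gamma$ in the trivialization equals the winding $1$ of $\gamma_0$.

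To conclude I would show $\pi\circ\gamma$ is constant. The clean case is when $\pi\circ\gamma$ is a Legendrian curve with at most isolated velocity zeros: the leading-order term of $(\pi\gamma)'$ determines a continuous extension of the tangent direction in $\R\PP(\xi)$, which must equal $\phi$ throughout. Approximating by a Legendrian immersion and using that its unit tangent $\NS^1 \to \NS^1$ has integer rotation $r$, the projection to $\R\PP^1$ winds $2r$ times; stability of winding under smooth perturbation transfers this to the original curve, contradicting the odd value $1$. The delicate case is when $\pi\circ\gamma$ has genuine intervals of constancy separated by non-trivial Legendrian arcs. Here one combines the winding argument on each moving arc with the local Lorentz model of Lemma \ref{lem:tangentModel} around each $\SW$-integral segment of $\gamma_0$: in that model a nearby horizontal curve has $z$-coordinate monotone non-decreasing as one moves along the $\SW$-direction, with strict increase exactly where it fails to be $\SW$-tangent. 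Summing the local strict increments over the moving arcs and comparing with the vanishing net increment forced by closure of the loop yields the desired contradiction, so every moving arc must be trivial and $\pi\circ\gamma$ is constant.

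Once openness is in hand, $\mathcal{X}$ is a union of connected components of $\HI(\SD(\xi))$. Each $\mathcal{X}^\pm$ fibres over $M$ with fibre $\mathrm{Diff}^+(\NS^1, \pi^{-1}(p))$, which deformation retracts onto $\pi^{-1}(p) \cong \R\PP^1$ by sending a parametrization to the image of a chosen basepoint $0 \in \NS^1$ (reparametrization to constant speed provides the retraction). Globally this produces identifications $\mathcal{X}^\pm \simeq \PP(\xi)$. The main obstacle, in my view, is the mixed case in the previous paragraph: turning the chart-local $z$-monotonicity of the Lorentz model into a genuine global obstruction around the closed fibre, since the $z$-coordinate is not canonical across distinct Lorentz charts.
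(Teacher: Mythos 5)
The reduction of the statement to the isolation claim, and your description of the two components (parametrised fibres of $\pi$, retracting to $\PP(\xi)$ by evaluation at a basepoint after straightening the parametrisation), are fine and consistent with what the paper's proof takes as the easy part. The gap is in the central openness/isolation step, and it is genuine. Your ``clean case'' parity argument does not close: for a $C^1$-small horizontal perturbation $\gamma$ of a fibre, the projectivised tangent direction of $\pi\circ\gamma$ (equal to the fibre coordinate $\phi$ wherever the projected velocity is nonzero) indeed has odd winding, but this only excludes $\pi\circ\gamma$ being a Legendrian \emph{immersion}. A Legendrian curve with isolated velocity zeros realises odd winding without any contradiction --- a front with an odd number of cusps does exactly this, and these are precisely the candidate deformations that the proposition must rule out. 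The appeal to ``approximating by a Legendrian immersion'' and ``stability of winding'' fails here: smoothing a cusp into an immersed arc changes the tangent-direction winding (the unit tangent reverses at a cusp), so the even-winding property of immersions does not transfer back to the original curve. For the remaining case you propose summing chart-local monotonicity of the $z$-coordinate in the Lorentz model, and, as you yourself note, this does not globalise: one Lorentz chart covers strictly less than a projective turn, and the $z$-coordinates of different charts are not related in a way that makes the local increments add up to a constraint around the closed loop. (Also, in that model the strict increase of $z$ happens where the $t$-coordinate is nonzero, not where the curve fails to be $\SW$-tangent.) So the heart of the proposition is left unproved.

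For comparison, the paper resolves exactly this point in two different ways. The first proof reduces to Lemma \ref{lem:tangentModel}: if the perturbation is tangent to a $\SW$-orbit on some open interval $I$, then its restriction to $\NS^1\setminus I$ is a deformation, relative to its ends, of an orbit segment making \emph{less} than one projective turn, so local rigidity applies; if there is no such interval, the transverse model of Lemma \ref{lem:transverseModel} is used to manufacture one by a further $C^1$-small modification, leading to a contradiction. The alternate proof in Section \ref{sec:shortCurves} attacks the odd-cusp fronts head on: piecewise convexity, an odd number of alternating cusps, and an induction removing Reidemeister I loops show the front bounds strictly positive area, contradicting the zero-area constraint forced by the loop closing up. One of these two mechanisms --- the less-than-one-turn reduction or the area/cusp induction --- is what your argument still needs.
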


\begin{proof}
Everything is reduced to showing that the only $C^1$--perturbations of such a curve $\gamma$ are the nearby curves tangent to $\SW(\xi)$. Take some curve $\eta$ that is $C^1$--close to $\gamma$. If it is tangent to a $\SW(\xi)$--orbit $\tilde\gamma$ in an open interval $I \subset \NS^1$, then $\eta|_{\NS^1 \setminus I}$ is a compactly supported deformation of $\tilde\gamma|_{\NS^1 \setminus I}$, which makes less than one projective turn. Applying Lemma \ref{lem:tangentModel} implies that $\eta$ is a reparametrisation of $\tilde\gamma$. 

Otherwise, take some time $t_0$ where $\eta$ is transverse to $\SW(\xi)$. Lemma \ref{lem:transverseModel} yields a neighbourhood $U$ of $\eta(t_0)$. The vector field $\eta'(t)|_{[t_0-\delta, t_0+\delta]}$, for $\delta$ small, can be extended in $U$ to a vector field $X$ whose flowlines are $C^1$--close to $\eta$ (and thus graphical over $\gamma$). Now, in $\Op(\eta(t_0)) \subset U$, we perturb $X$ to make it tangent to $\SW(\xi)$. A suitable choice yields flowlines that are $C^1$--close to $\eta$ in $\Op(\partial U)$, that are still graphical over $\gamma$, and that are tangent to $\SW$ in an interval. Lemma \ref{lem:transverseModel} implies that we can take one such flowline and interpolate back to $\eta$ in $\Op(\partial U)$ to yield a curve $\tilde\eta$ that is a $C^1$--small deformation of $\gamma$. The proof concludes by applying to $\tilde\eta$ the reasoning in the first paragraph.
\end{proof}

In more general Engel manifolds it is not always true that sufficiently short curves tangent to $\SW$ are isolated as tangent immersions. This is explored in Section \ref{sec:shortCurves}.

\section{The $h$--principle for horizontal immersions} \label{sec:main}


Proposition \ref{prop:BH2} motivates us to focus our attention in the following class of curves:

\begin{definition}
We denote by $\HI^{\niso}(\SD) \subset \HI(\SD)$ the open subspace of those horizontal curves that are not everywhere tangent to $\SW$.
\end{definition}

Note that the space of unparametrised immersed curves that are everywhere tangent to $\SW$, the closed orbits of $\SW$, conforms a set of finite Hausdorff dimension in $\HI(\SD)$.


\subsection{A genericity result}

Our first theorem states that, once one restricts to the subspace $\HI^{\niso}(\SD)$, there are enough deformations to guarantee a ``\textsl{generic}'' picture:

\begin{theorem} \label{thm:generic}
Let $M$ be a $4$--manifold. Let $K$ be a closed manifold and fix a map $\SD: K \to \Engel(M)$. Denote by $\SW(k)$ the kernel of $\SD(k)$, $k \in K$.

Let $\gamma: K \to \Imm(M)$ be given satisfying $\gamma(k) \in \HI^{\niso}(\SD(k))$. Then, after a $C^\infty$--perturbation, it can be assumed that the set
\[ \{ (k, s) \in K \times \NS^1 \text{ ; } \gamma(k)'(s) \in \SW(k)_{\gamma(k)(s)} \} \]
is a submanifold of codimension $1$ in $K \times \NS^1$ in generic position with respect to the foliation with leaves $\{k\} \times \NS^1$.
\end{theorem}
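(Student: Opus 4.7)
The condition defining $Z$ is the vanishing of a smooth section: viewing $(k,p) \mapsto \SD(k)_p/\SW(k)_p$ as a line bundle over $K \times M$, pull it back along $(k,s) \mapsto (k, \gamma(k)(s))$ to get a line bundle over $K \times \NS^1$, and let $\sigma(k,s) = [\gamma(k)'(s)]$. Then $Z = \sigma^{-1}(0)$. The theorem amounts to showing that a generic $C^\infty$-small perturbation of $\gamma$ (staying in $\HI^{\niso}$) makes $\sigma$ transverse to the zero section and makes the restrictions $\sigma|_{\{k\}\times\NS^1}$ generic in the Morse sense, so that the projection $Z \to K$ has at worst fold singularities; this is what "generic position with respect to the foliation" buys us.

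The flexibility needed for this transversality is furnished by Darboux charts. Around any point of $M$, Engel's theorem yields coordinates in which $\SD = \SD_{\std}$ and $\SW = \langle \partial_t \rangle$; in such a chart, a horizontal curve is $(x(s), y(s), z(s), t(s))$ with $y' = zx'$ and $z' = tx'$, hence determined by the two free functions $x(s)$, $t(s)$ together with initial values of $y, z$. Under this identification, $\sigma$ becomes $x'(s)$ (up to a nonvanishing factor), and perturbations of $x(k,s)$ give horizontal perturbations of $\gamma$ that directly control $\sigma$. A standard parametric Thom transversality argument applied to $\partial_s x$ on each chart then delivers the desired genericity.

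The central difficulty, and what I would expect to dominate the argument, is the globalisation of these chart-local perturbations into a family of closed horizontal loops. Altering $(x, t)$ on one chart shifts the integrated boundary values of $(y, z)$ through the horizontality equations, so to preserve the closure of each $\gamma(k)$ one must introduce compensating perturbations elsewhere. Here the $\niso$ hypothesis is crucial: every $\gamma(k)$ contains a nonempty open subinterval on which it is transverse to $\SW(k)$, and by Lemma \ref{lem:transverseModel} horizontal perturbations of $\gamma(k)$ on that subinterval are parametrised by compactly supported functions $y(x)$ vanishing to order three, giving infinite-dimensional freedom to absorb any small shifts in the remaining coordinates. Patching via a partition of unity on $K \times \NS^1$, together with parametric openness of transversality and compactness of $K \times \NS^1$, then yields the required perturbation.
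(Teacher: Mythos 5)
Your proposal is correct and follows essentially the same route as the paper: local normal forms in which a horizontal curve is recorded by free functions plus integral constraints, parametric Thom transversality applied to the free data (the paper works near the tangencies in the Lorentzian model, where tangencies are the critical points of $t_k$, rather than in your Darboux chart where they are the zeros of $x'$), and globalisation by absorbing the closure constraint on a portion where the curve is transverse to $\SW$ -- guaranteed by the $\niso$ hypothesis -- using the $J^2(\R,\R)$ model of Lemma \ref{lem:transverseModel}; the paper merely organises this with an ordered finite cover of $K\times\NS^1$, handled cyclically and ending at a transverse (type II) chart, instead of a partition of unity. One small imprecision: the perturbations with $y$ vanishing to order three at both ends fix the endpoints and therefore cannot absorb the shifted boundary values; what is actually used is the unconstrained interpolation in $J^2(\R,\R)$ from the shifted incoming $2$--jet back to the original curve, relative at one end only.
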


We shall dedicate the rest of this subsection to its proof. It relies on local $C^\infty$--small deformations using Lemmas \ref{lem:transverseModel} and \ref{lem:tangentModel}. 

\textsl{Setup}. Let us construct an adequate cover of the space $K \times \NS^1$. Locally, for every $(k,s) \in K \times \NS^1$, we can find vector fields $W(k',s)$ spanning $\SW(k')$. Denote by $\eta(k')$ the integral curve of $W(k',s)$ with $\eta(k')(s) = \gamma(k')(s)$.

We can apply Lemma \ref{lem:tangentModel} parametrically to the curves $\eta(k')|_{[s-\varepsilon,s+\varepsilon]}$. If $\gamma(k)'(s) \in \SW(k)$, there is a product neighbourhood $\D_\varepsilon(k) \times [s-\varepsilon,s+\varepsilon]$ in which every curve $\gamma(k')$, $k' \in \D_\varepsilon(k)$, is graphical over $\eta(k')$ in said models. We say that this neighbourhood is of type I.

Otherwise, if $(k,s)$ is such that $\gamma(k)'(s)$ is transverse to $\SW(k)$, so are the nearby curves. We use Lemma \ref{lem:transverseModel} parametrically to yield a product neighbourhood of $(k,s)$ in which the curves $\gamma(k')$ look like the zero section in $J^2(\R, \R)$. We call this a neighbourhood of type II.

Then, by compactness of $K \times \NS^1$, we can find a finite cover $\{U_{i,j}\}$ comprised of neighbourhoods like the ones we just described. We assume that it is the product of a covering $\{W_i\}$ in $K$ and a covering $\{V_j = \Op([\dfrac{j}{N}, \dfrac{j+1}{N}])\}$, $j=0,..,N-1$, in $\NS^1$. We order the neighbourhoods $\{U_{i,j} = W_i \times V_j\}$ as follows: for any fixed $W_i$, we find some $j_i \in \{0,..N-1\}$ such that $W_i \times V_{j_i}$ is of type II and we order the $W_i \times V_j$ cyclically increasing from $j=j_i+1$ to $j=j_i$. The order in which we consider each $W_i$ is not important and hence we just proceed as we numbered them. See Figure \ref{fig:order}.

\begin{figure}[ht] 
\centering
\includegraphics[scale=0.7]{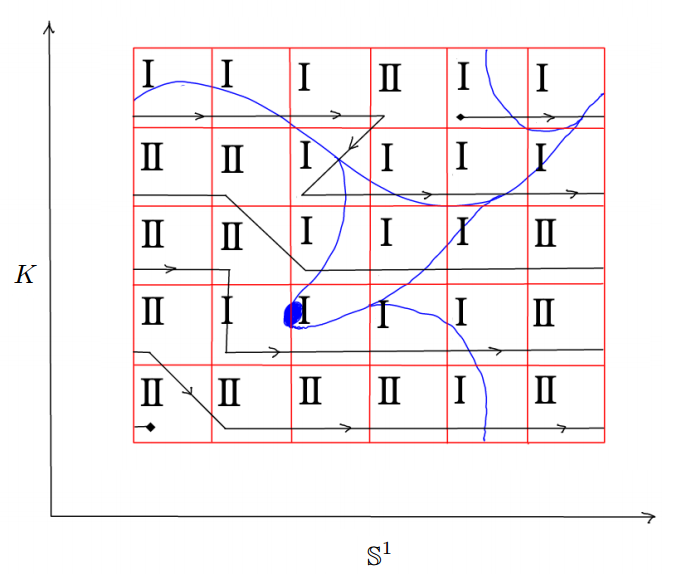}
\caption{In red we depict the manifold $K \times \NS^1$. The tangencies with $\SW$ are the points in blue. The sets $U_{i,j}$ correspond to neighbourhoods of the smaller red squares; they have been numbered as I or II depending on whether they are of the first or second type. The black line with arrows indicates the order in which we proceed for the induction.}
\label{fig:order}
\end{figure}

The idea now is to modify $\gamma$ over the neighbourhoods $U_{i,j}$ inductively using the order we just constructed. Over those of type I we will deform to achieve the desired transversality. Over those of type II we have more flexibility, so we shall use them to ensure that the deformation $\tilde\gamma$ does close up.

Take a neighbourhood $U_{i,j}$. Denote by $\tilde U_{i,j}$ the union of the neighbourhoods over which a $C^\infty$--close deformation $\tilde\gamma$ of $\gamma$ has been defined already. 

\textsl{Type I neighbourhoods.} Assume that $U_{i,j}$ is of type I. Applying Lemma \ref{lem:tangentModel}, we have a family of curves
\[ \gamma(k): V_j \to (\R^4, \ker(dy-tdx) \cap \ker(dz-t^2dx)), \qquad k \in W_i \]
that are graphical over the $x$ axis and thus given by functions:
\[ \gamma(k)(s) = (x_k(s),y_k(x_k),z_k(x_k),t_k(x_k)) \]
with $x_k(s)$ a diffeomorphism with its image, $t_k(x)$ some arbitrary function, and
\begin{equation}\label{eq:integrals}
	\left\{\begin{aligned} 
	\quad y_k(x_k(s)) = y_k(x_k(-1))+\int_{x_k(-1)}^{x_k(s)} t_k(x) dx \\
	\quad z_k(x_k(s)) = z_k(x_k(-1))+\int_{x_k(-1)}^{x_k(s)} t^2_k(x) dx 
	\end{aligned} \right.
\end{equation}
where the dependence on $k$ is smooth. Analogously, $\tilde\gamma$ is defined by functions $(\tilde x_k,\tilde y_k,\tilde z_k, \tilde t_k)$ which are only defined over $U_{i,j} \cap \tilde U_{i,j}$.

Tangencies with $\SW$ are given by $t'_k, \tilde t'_k = 0$. We extend $\tilde t_k$ from $U_{i,j} \cap \tilde U_{i,j}$ to the whole of $U_{i,j}$ arbitrarly, ensuring that it remains $C^\infty$--close to $t_k$ and that it has generic critical points (for a family of dimension $\dim(K)$). We can extend $\tilde y_k$ and $\tilde z_k$ to the whole of $U_{i,j}$ using the integral expressions \eqref{eq:integrals} with initial values those in $U_{i,j} \cap \tilde U_{i,j}$. The order that we chose for the induction means that $U_{i,j} \cap \tilde U_{i,j} \cap (\{k\} \times \NS^1)$ is connected at every such step, so in particular we are defining $\tilde y_k$ and $\tilde z_k$ as integrals with boundary conditions given only at one end of the interval $V_j$. Note that this construction is indeed relative to $\tilde U_{i,j}$.

\textsl{Type II neighbourhoods.} Assume that $U_{i,j}$ is of type II. In its local model, given by Lemma \ref{lem:transverseModel}, the perturbations $\tilde\gamma(k)$ (which are defined only over $\tilde U_{i,j}$) can be assumed to be graphical over $\gamma(k)$, which are seen as intervals contained in the zero section of $J^2(\R, \R)$. $\tilde\gamma$ is thus described by a family of functions $\tilde y_k$ and their first and second derivatives $\tilde z_k$ and $\tilde t_k$, respectively. Extend $\tilde y_k$ arbitrarily to $U_{i,j}$ while keeping it $C^\infty$ close to $y_k$. Here the boundary conditions do not yield integral constraints, we simply take $\tilde z_k$ and $\tilde t_k$ to be the corresponding derivatives of $\tilde y_k$; this is the reason why, for each $W_i$, we left $W_i \times V_{j_i}$, a neighbourhood of type II, for last. No additional tangencies with $\SW$ are introduced doing this. \hfill{$\Box$}

\begin{remark}
In type II neighbourhoods, after extending $\tilde y_k$, one could construct a bump function $\psi: U_{i,j} \to \R$ that is identically $1$ near $\partial U_{i,j}$ and identically zero in a slightly smaller ball and then take $\psi \tilde y_k$ and its derivatives as the desired extensions to the whole of $U_{i,j}$. In this manner, by taking the cover to be fine enough, one can strengthen Theorem \ref{thm:generic} saying that the deformation $\tilde\gamma$ can be taken to agree with $\gamma$ in an arbitrarily large closed set disjoint from the tangencies.
\end{remark}

\subsection{The main theorem} \label{ssec:main}

Let us state our main result:

\begin{theorem} \label{thm:main}
Let $(M, \SD)$ be an Engel manifold. The inclusion $\HI^\niso(\SD) \to \FHI(\SD)$ is a weak homotopy equivalence.
\end{theorem}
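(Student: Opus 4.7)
The plan is to prove the full parametric statement: for every compact pair $(K, L)$ and every family $\{(\gamma_k, F_k)\}_{k \in K}$ in $\FHI(\SD)$ whose restriction to $L$ lies in $\HI^\niso(\SD)$, there is a homotopy through $\FHI(\SD)$, relative to $L$, ending at a family in $\HI^\niso(\SD)$. Taking $(K,L) = (\D^n, \partial \D^n)$ and $(K, L) = (\D^{n+1}, \partial \D^{n+1})$ respectively yields surjectivity and injectivity on $\pi_n$.

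The approach mirrors the covering-and-gluing scheme of Theorem \ref{thm:generic}. After a preliminary formal homotopy (possible by the openness of the non-tangency-to-$\SW$ condition) and a first application of Theorem \ref{thm:generic} (adapted to the formal data, since the pertinent condition depends on $F_k$ rather than $\gamma'_k$), I can assume the locus $\{(k,s) : F_k(s) \in \SW_{\gamma_k(s)}\}$ is a codimension-$1$ submanifold of $K \times \NS^1$ in general position with respect to the $\NS^1$-foliation. I then cover $K \times \NS^1$ by product neighborhoods $W_i \times V_j$ of types I and II, equipped with the Lorentz model from Lemma \ref{lem:tangentModel} and the jet-space Darboux model from Lemma \ref{lem:transverseModel} respectively. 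The key observation is that in the latter model, horizontality for $\SD_\std$ of a curve transverse to $\SW$ is precisely holonomicity in $J^2(\R,\R)$, so the local $h$-principle on type II pieces is the trivial one: formal data $(x, y, z, t)$ is deformed, rel a closed set where it is already holonomic, to $(x, y, y', y'')$. On type I pieces, horizontal deformations are rigidly prescribed by a choice of profile $t_k(x)$ together with initial data (Lemma \ref{lem:tangentModel}), and we simply match initial data to the adjacent already-processed pieces.

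Processing the cover in the cyclic order of Theorem \ref{thm:generic}, reserving for each $W_i$ the distinguished type II piece $W_i \times V_{j_i}$ for last, produces a family of honest horizontal arcs that may fail to close up because of the integral constraints \eqref{eq:integrals}. This is the analog of Geiges' area obstruction (Proposition \ref{prop:Geiges}). In the buffer piece — which exists for every $k$ because of the $\HI^\niso$ hypothesis and contains a full Darboux chart — we cancel this obstruction by adjusting $y_k$ via a $C^\infty$-small compactly supported bump whose integrals realize the required corrections, the derivatives $y_k',\, y_k''$ then providing the matching $z_k,\, t_k$. A final application of Theorem \ref{thm:generic} to the result places the family in $\HI^\niso(\SD)$. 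The main obstacle is this parametric closure argument in the buffer piece: Geiges' original argument operates in all of $\R^4$ where areas are adjusted freely, whereas here the Darboux chart is bounded and the corrections must be uniformly $C^0$-small and must cancel \emph{both} area integrals in \eqref{eq:integrals} simultaneously, continuously in $k$, and with vanishing magnitude on $L$. This is handled by shrinking the cover using compactness of $K$, and by choosing the bump function as a linear combination of two independent profiles whose integral pairings against $1$ and $t_k$ form an invertible matrix varying continuously with $k$.
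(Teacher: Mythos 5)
There is a genuine gap, and it sits at the heart of the argument: your covering scheme is borrowed from Theorem \ref{thm:generic}, but that scheme only makes sense when the input is already a family of \emph{genuine} horizontal curves. The local models of Lemmas \ref{lem:transverseModel} and \ref{lem:tangentModel} are normal forms for tubular neighbourhoods of horizontal curves, and the description of nearby curves as graphs (the functions $t_k(x)$ and the integral formulas \eqref{eq:integrals} in type I pieces, the jets $(x,y,y',y'')$ in type II pieces) is valid only for $C^1$--small perturbations of such curves. For a general element of $\FHI(\SD)$ the underlying map $\gamma_k$ bears no relation to $\SD$ whatsoever --- it can be constant, non-immersed, or wildly non-graphical --- and your type I/II dichotomy is read off from $F_k$, not from $\gamma_k'$, so there is no model along the family in which the formal data becomes graphical. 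Consequently the step you call trivial, ``formal data $(x,y,z,t)$ is deformed, rel a closed set where it is already holonomic, to $(x,y,y',y'')$'', is exactly the formal-to-genuine passage that constitutes the theorem; it is not a triviality, and in the strong form you need it (relative at both ends of the interval, $C^0$--close, and with the output transverse to $\SW$) it is false, because of the rotation of $F_k$ relative to the tangent direction and because of the lifting (area) constraint. The paper supplies precisely this missing ingredient by pushing down under the Geiges projection and invoking the full $h$--principle for Legendrian immersions (Proposition \ref{prop:legendrianhPrinciple}), which is what inserts the cusps/zig-zags that reconcile an arbitrary formal solution with an honest horizontal curve; nothing in your proposal plays that role, and without it the induction over the cover never gets started on formal data.

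A second, independent problem is the closing-up argument in the buffer piece. If you insist that the output there is a $2$--jet graph (transverse to $\SW$) inside the chart $\R^3\times(-\varepsilon,\varepsilon)$, then $|y''|=|t|<\varepsilon$, and with the $2$--jet matched at both endpoints of an $x$--interval of length $\ell$ the achievable correction of $y$ is of order $\varepsilon\ell^2$ (also, the boundary conditions in the jet model are $2$--jet matching at the two ends, not two integral constraints as in \eqref{eq:integrals}). The discrepancy you must cancel is dictated by the rest of the construction and need not be that small, so the ``two independent profiles with invertible pairing matrix'' handles the algebra of hitting two target numbers but not the quantitative constraint of staying in the chart and $C^0$--close. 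This is why the paper's Proposition \ref{prop:main}, Step III, abandons transversality to $\SW$ in the adjustment region and instead inserts an arbitrarily large number $N$ of Reidemeister I loops with controlled front slopes: each loop contributes a bounded amount of area, and $N$ is the free parameter that makes the total correction attainable without leaving the Darboux ball. Your buffer argument, as stated, cannot be repaired by shrinking the cover, since shrinking $\ell$ only makes the attainable correction smaller.
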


\begin{remark} \label{rem:main}
We shall prove the following slightly stronger result: Let $K$ be some compact $m$--dimensional manifold, possibly with boundary, and fix $\SD: K \to \Engel(M)$. Then, any map $\phi: K \to \FImm(M)$ with $\phi(k) \in \HI^\niso(\SD(k))$ for $k \in \partial K$ and $\phi(k) \in \FHI(\SD(k))$ for all $k$, is homotopic, relative to $\partial K$, to a map $\tilde\phi: K \to \FImm(M)$ with $\tilde\phi(k) \in \HI^\niso(\SD(k))$.
\end{remark}

Most of the work needed for the theorem is contained in the following proposition, which states that a parametric, relative (with respect to some some subset $B$ of the interval), relative in the parameter (with respect to some subset $A$ of the parameter space $K$), $C^0$--close h--principle holds for horizontal immersions of the interval.

\begin{proposition} \label{prop:main}
Let $(M = \R^3 \times (-\varepsilon, \varepsilon), \SD = \SD_\std)$. Let $A \subset \partial\D^m$ be some closed CW--complex. Let $B \subset [0,1]$ be either $\{0,1\}$, $\{0\}$ or the emptyset. Fix a map $\psi_k \in \FHI(\Op([0,1]),\SD)$, $k \in \Op(\D^m)$, conforming to the following properties:
\begin{itemize}
\item $\psi_k \in \HI^\niso(\Op([0,1]),\SD)$ for $k \in \Op(A)$.
\item $\psi_k$ is horizontal with respect to $\SD$ for $s \in \Op(B)$.
\end{itemize}

Then, there is a homotopy $\psi^\delta_k \in \FHI(\Op([0,1]),\SD)$, $\delta \in [0,1]$, starting at $\psi_k^0 = \psi_k$, such that:
\begin{itemize}
\item $\psi_k^1|_{[0,1]} \in \HI^\niso([0,1],\SD)$ for $k \in \D^m$,
\item $\psi_k^\delta = \psi_k$ for $k \in \Op(A)$ or $s \in \Op(B)$,
\item $\psi_k^\delta = \psi_k$ away from $\D^m \times [0,1]$,
\item writing $(\gamma_k^\delta, F_k^\delta)$ for the two components of $\psi_k^\delta$, $\gamma_k^\delta$ is $C^0$--close to $\gamma_k^0$.
\end{itemize}
\end{proposition}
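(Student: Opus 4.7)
The plan is to adapt Geiges' proof of Proposition \ref{prop:Geiges} to the parametric, relative, $C^0$-close setting, working explicitly in the Darboux model $J^2(\R,\R)$ and its Geiges projection.

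First, I would homotope the formal section $F_k$ within $\SD|_{\gamma_k}$, keeping $\gamma_k$ fixed and leaving the data on $\Op(A)$, $\Op(B)$, and outside $\D^m \times [0,1]$ untouched, so as to arrange $F_k$ to be transverse to $\SW$ on a nonempty open set $U_k \subset [0,1]$ disjoint from $B$. This is possible because $\SW$ has rank $1$ inside the rank-$2$ bundle $\SD|_{\gamma_k}$, so the transverse-to-$\SW$ locus is open and dense in each fiber; the condition $\psi_k \in \HI^{\niso}$ over $\Op(A)$ provides compatible boundary data at the relevant points. In Darboux coordinates this writes $F_k(s) = a_k(s)\,\partial_t + b_k(s)(\partial_x + z\partial_y + t\partial_z)|_{\gamma_k(s)}$ with $b_k(s) \neq 0$ on $U_k$.

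Second, push everything down via the Geiges projection $\pi_\Geiges \colon (x,y,z,t) \mapsto (x,z,t)$. The pair $(\pi_\Geiges \circ \gamma_k,\, d\pi_\Geiges \circ F_k)$ is then a parametric family of formal Legendrian immersions into $(\R^3, \ker(dz-tdx))$, already honestly Legendrian over $\Op(A)$ and $\Op(B)$. I would invoke the relative, parametric, $C^0$-close $h$-principle for Legendrian immersions into a contact $3$-manifold (a standard consequence of ampleness and convex integration) to produce actual Legendrians $\tilde\gamma^3_k$ agreeing with $\pi_\Geiges \circ \gamma_k$ on $\Op(A)$, $\Op(B)$, and outside $\D^m \times [0,1]$, and $C^0$-close to it throughout.

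Third, lift $\tilde\gamma^3_k = (x(s),z(s),t(s))$ to a horizontal curve in $\R^4$ by setting $\tilde y_k(s) = y_k(0) + \int_0^s z(u)\,x'(u)\,du$. The only obstruction to this lift closing up at the prescribed endpoint $\gamma_k(1)$ is the area defect $\Delta_k := y_k(1) - y_k(0) - \int_0^1 z(u)\,x'(u)\,du$, which is a continuous function on $\D^m$ vanishing on $A$. Following Geiges, I would eliminate $\Delta_k$ by adding a $C^0$-small, high-frequency oscillation to $z$ supported in $U_k \setminus (\Op(A) \cup \Op(B))$, whose integral against $x'$ equals $\Delta_k$; the high-frequency/low-amplitude regime yields arbitrary $C^0$-smallness, and a cutoff in $k$ makes the oscillation vary continuously and vanish over $A$.

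The main obstacle is precisely this parametric area adjustment: the compensating oscillation must depend continuously on $k$, vanish identically over $A$, and be supported disjointly from $B$ while living in the region where $F_k$ is transverse to $\SW$. The thinness of $M = \R^3 \times (-\varepsilon, \varepsilon)$ in one coordinate is not an obstruction since all intermediate perturbations are $C^0$-small by construction. The final homotopy $\psi^\delta_k$ is assembled by concatenating the linear homotopy of $F_k$ from step one with the interpolation from $\gamma_k$ to $\tilde\gamma_k$ coming from steps two and three, dragging the formal direction along continuously; the resulting $\tilde\gamma_k$ lies in $\HI^{\niso}(\SD)$ because it is transverse to $\SW$ on the open set $U_k$.
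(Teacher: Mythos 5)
Your Steps 1--2 follow the paper's route (Geiges projection, then the parametric relative $h$--principle for Legendrian immersions in $(\R^2\times(-\varepsilon,\varepsilon),\ker(dz-tdx))$, then lifting by integrating $z\,dx$), and you correctly identify that the only obstruction to closing up the lift when $B=\{0,1\}$ is an area defect $\Delta_k$ varying continuously in $k$ and vanishing near $A$. The gap is in how you propose to kill $\Delta_k$. First, $\Delta_k$ is in general \emph{not} small: the $y$--endpoints of the formal curve $\gamma^0_k$ have no relation to $\int z\,dx$ along the Legendrian produced by the $h$--principle. A perturbation $\delta z$ of the $z$--coordinate that is $C^0$--small and supported in a fixed subinterval changes the area by at most $\|\delta z\|_{C^0}\cdot\int_{\mathrm{supp}}|x'|$, so it cannot realize a defect of a priori unbounded size; and high frequency is of no help, since the area correction depends only on the mean of $\delta z$ against $dx$, not on its frequency. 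Second, and more fundamentally, you cannot oscillate $z$ while keeping the curve Legendrian inside the model: Legendrianity forces $t=z'/x'$, so a low--amplitude, high--frequency wiggle in $z$ sends the slope $t$ out of $(-\varepsilon,\varepsilon)$, i.e.\ the curve escapes the Darboux ball. This is exactly the point you dismiss (``the thinness of $M$ is not an obstruction since all intermediate perturbations are $C^0$--small''): $C^0$--smallness of the front does not control $t$, which is an honest coordinate of $M$ and must stay bounded by $\varepsilon$. The paper's remark after Proposition~\ref{prop:Geiges} flags precisely this: Geiges' area adjustment uses all of $\R^4$ and does not transplant verbatim to a thin Darboux chart.

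The paper's Step III resolves this by a different mechanism: on intervals $I_k$ where the front of the honest Legendrian has no cusps (such intervals exist by genericity of the fronts, and small disks $\SU_k\subset\D^m$ over which they persist cover $\D^m\setminus\Op(A)$ by compactness), one inserts $N$ small ``Reidemeister I'' loops into the front, with signs of enclosed area alternating. Each loop keeps the slope arbitrarily close to the original (hence stays in $\R^2\times(-\varepsilon,\varepsilon)$) and is $C^0$--small, so it encloses only a bounded small area; but $N$ is free, so by increasing $N$ and tuning the loop sizes one realizes an arbitrarily large correction, exactly $\Delta_k$, continuously in $k$, interpolating to no correction on $\Op(\SU_k)\setminus\SU_k$ and over $\Op(A)$. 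Some such quantitative device replacing your oscillation is indispensable; without it your Step 3, and hence the proof, does not close. (Your preliminary homotopy of $F_k$ to be transverse to $\SW$ on $U_k$ is harmless but also not what is needed: the transversality that matters is that of the \emph{actual} Legendrian produced by the $h$--principle, which the paper gets from genericity of its front singularities.)
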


\textbf{Remark:} We define the domain to be $\Op(\D^m) \times \Op([0,1])$ so that $\psi^\delta_k$ glues with $\psi_k$ away from $\D^m \times [0,1]$. This is important when proving a relative statement. In this direction, by $\Op(A)$ we mean an arbitrarily small neighbourhood of the radial projection of $A$ to $\partial\Op(\D^m)$ that still contains $A$. The definition of $\Op(B)$ is analogous.

Let us explain how to deduce our main theorem using Proposition \ref{prop:main}.

\begin{proof}[Proof of Theorem \ref{thm:main}]
We will prove Remark \ref{rem:main} instead. Denote by $\tilde K$ the complement of a small collar neighbourhood of $\partial K$ such that $\phi(k) \in \HI^\niso(\SD(k))$ for all $k \notin \tilde K$. After applying Theorem \ref{thm:generic}, we can assume that the curves $\phi(k)$, $k \in \Op(\partial\tilde K)$, are in general position with respect to $\SW(k)$. Find a triangulation for $\partial\tilde K \times \NS^1$ and use Thurston's jiggling to put it in general position with respect to the foliation $\SF = \coprod\{k\} \times \NS^1$. We extend this triangulation to $\tilde K \times \NS^1$ and then we apply jiggling once more. The resulting triangulation we denote by $\ST$. Write $\pi_K$ and $\pi_{\NS^1}$ for the projections of $K \times \NS^1$ to its factors.

Using $\ST$ as a tool, we find a cover of $K \times \NS^1$ by flowboxes of $\SF$ that is well suited to applying Proposition \ref{prop:main} iteratively. Refer to Figures \ref{fig:tri2d} and \ref{fig:tri3d} for a pictorial depiction and see \cite[Prop. 29]{CPPP} for a detailed proof. Regard $\ST$ as a collection of simplices $\{\tau\}$. We claim that there is a collection of open sets $\{S(\tau)\}_{\tau \in \ST}$ in $K \times \NS^1$ satisfying that:
\begin{enumerate}
\item $\sigma \subset \cup_{\tau \subset \sigma} S(\tau)$ and $\partial\sigma \subset \cup_{\tau \subsetneq \sigma} S(\tau)$,
\item if $S(\tau) \cap S(\sigma) \neq \emptyset$, then either $\sigma \subset \tau$ or $\tau \subset \sigma$,
\item there is a diffeomorphism $g(\sigma): \D^m \times [0,1] \to S(\sigma) \subset K \times \NS^1$ with $g(\sigma)^*\langle \partial_s \rangle = \langle \partial_t \rangle$, where $t$ is the last coordinate in the domain, and $s$ is the coordinate in the $\NS^1$,
\item if $\sigma$ is not top dimensional, $g(\sigma)^{-1}(\cup_{\tau \subsetneq \sigma} S(\tau)) = \Op(A) \times [0,1]$ with $A$ some closed CW--complex lying in $\partial \D^m$.
\end{enumerate}
 The proof of this claim goes roughly as follows: we find a flowbox for the zero simplices. Each such flowbox has a vertical boundary, tangent to the foliation by lines, and a horizontal boundary. If these flowboxes are suitably chosen, every higher dimensional simplex intersects them in the vertical boundary. Proceeding inductively in the dimension, given any simplex, we consider the topological disk contained in it obtained by removing the neighbourhood of its boundary that we built in previous steps. We find a flowbox neighbourhood for this smaller disk in such a way that it intersects the previous neighbourhoods only in their vertical boundaries. This proves the claim.

\begin{figure}
\begin{minipage}[hl]{0.47\textwidth}
\centering
\includegraphics[scale=0.38]{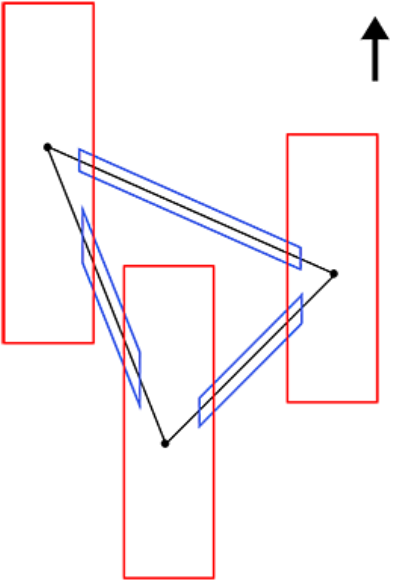}
\caption{Two dimensional example of the sets $g(\tau)(\D^m \times [0,1])$. The foliation $\coprod \{k\} \times \NS^1$ is given by the vertical lines. In red, the sets corresponding to $0$-simplices. In blue, those corresponding to $1$-simplices.}
\label{fig:tri2d}
\end{minipage}
\begin{minipage}[hr]{0.50\textwidth}
\centering
\includegraphics[scale=0.4]{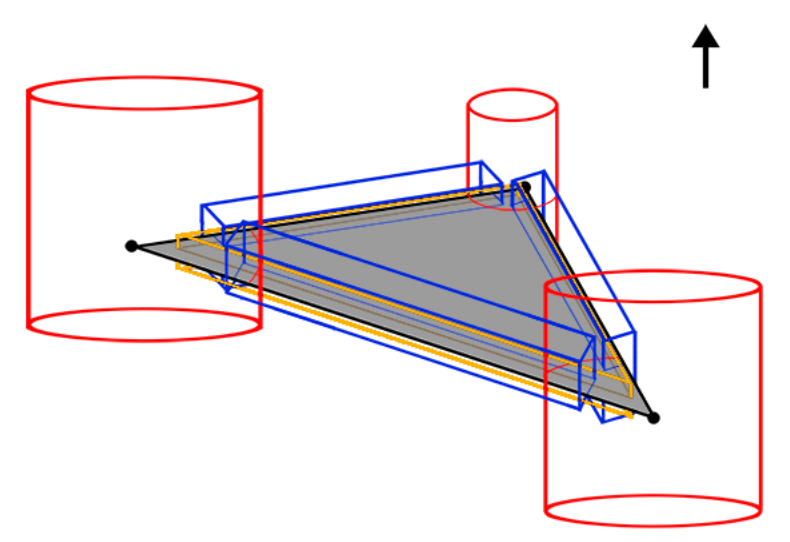}
\caption{Three dimensional example. Red: closed disks for the $0$-simplices. Blue: closed disks for the $1$-simplices. Yellow: closed disks for the $2$-simplices.}
 \label{fig:tri3d}
\end{minipage}
\end{figure}

We shall deform $\phi$ over the neighbourhoods $g(\sigma)(\D^m \times [0,1]) \subset K \times \NS^1$, for those $\sigma$ not contained in $\partial\tilde K$. We proceed inductively on the dimension of $\sigma$. Observe that, since $\ST$ was a jiggling of an arbitrary triangulation, we can assume that $\ST$ is fine enough so that $s \to \phi(k)(s)$, for $(k,s) \in g(\sigma)(\D^m \times [0,1])$, maps into a Darboux ball $B_k$ for $\SD(k)$. If the triangulation was fine enough, we can parametrically identify the balls $B_k$ with the $k$--independent Darboux ball $(M = \R^3 \times (-\varepsilon, \varepsilon), \SD = \SD_\std)$.

Now we apply Proposition \ref{prop:main} to the map $\psi_k(s) = \phi(\pi_K \circ g(k,s))(\pi_{\NS^1} \circ g(k,s))$. If $\dim(\sigma) < m+1$, we take $B=\emptyset$ and $A$ as in the enumeration above. If $\sigma$ is top dimensional, we take $A = \partial \D^m$ and $B = \{0,1\}$.
\end{proof}

Now, the strategy of proof for Proposition \ref{prop:main} is quite similar in spirit to Geiges' proof of Proposition \ref{prop:Geiges}. We first want to use the h--principle for legendrian immersions to do some work for us. Let us recall its statement:

\begin{proposition} \label{prop:legendrianhPrinciple}
A $C^0$--dense, parametric, relative, and relative to the parameter $h$--principle holds for legendrian immersions in contact manifolds. 
\end{proposition}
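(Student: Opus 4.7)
The plan is to deduce the proposition from Gromov's general $h$-principle for first-order differential relations, applied via convex integration. First I would recast being Legendrian as a first-order condition: an immersion $f \colon N \to (M, \xi = \ker\alpha)$ is Legendrian if and only if $j^1 f$ is a section of the subbundle $\mathcal{R} \subset J^1(N,M)$ whose fibre at $(p,q)$ consists of linear monomorphisms $L\colon T_pN \to T_qM$ with $L(T_pN) \subset \xi_q$ and $L^*(d\alpha) = 0$. Formal Legendrian immersions correspond bijectively to arbitrary sections of $\mathcal{R}$, so the proposition becomes the assertion that the inclusion of holonomic sections into all sections of $\mathcal{R}$ is a weak equivalence with the listed quantitative, parametric, and relative properties.

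The main technical step is to verify that $\mathcal{R}$ is ample along every principal direction in $TN$, in Gromov's sense. Fix a point $p \in N$, a hyperplane $H \subset T_pN$, and a partial $1$-jet over $H$ already compatible with $\mathcal{R}$. The slice of $\mathcal{R}$ over this data is the set of extension velocities $v \in \xi_{f(p)}$ that lie in the $d\alpha$-symplectic orthogonal of the image of $H$ and that preserve injectivity of the extended linear map; this is the complement of a proper linear subspace inside an affine subspace of $\xi_{f(p)}$, and hence its convex hull is the whole affine subspace. For the $1$-dimensional source case needed for the Geiges projection the slice reduces to $\xi_{f(p)} \setminus \{0\}$ and ampleness is immediate. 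Openness of $\mathcal{R}$ inside this affine bundle is clear, since injectivity is an open condition and the other defining equations cut out a smooth subbundle.

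With ampleness and openness in hand, Gromov's convex integration theorem, in the parametric and relative form developed by Eliashberg--Mishachev, applies essentially verbatim to $\mathcal{R}$ and produces a $C^0$-dense, parametric, relative, and relative-in-the-parameter $h$-principle, which is exactly the content of the proposition. The main obstacle in this plan is the ampleness verification when $\dim N \geq 2$: one needs to argue that the $d\alpha$-isotropy constraint does not cut each slice down to a set whose convex hull fails to span the principal direction. For the application to Legendrian curves, which is the only case actually used in the Geiges projection and therefore in Proposition \ref{prop:main}, this difficulty does not arise and the invocation of convex integration is direct.
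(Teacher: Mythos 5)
There is a genuine gap: the theorem you invoke does not apply to the relation you construct. The subset $\mathcal{R} \subset J^1(N,M)$ cut out by $L(T_pN)\subset \xi_q$ and $L^*d\alpha=0$ is a \emph{closed} subset of positive codimension, so it is not an open relation, and it is not ample in Gromov's sense either: with the restriction to a hyperplane $H\subset T_pN$ fixed, the principal subspace is all of $T_qM$, while your slice is $W\setminus L(H)$ with $W=\{v\in\xi_q: d\alpha(v,L(H))=0\}$, whose convex hull is the proper subspace $W$, not $T_qM$. The Eliashberg--Mishachev convex integration theorem (open, ample relations) therefore does not apply ``essentially verbatim''; the openness and ampleness you verify \emph{inside the constrained affine subbundle} is not the hypothesis of that theorem. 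The difficulty is not a technicality: convex integration perturbs the underlying map, so the base point $f(p)$ moves, and the closed constraint $\alpha(\partial_t f)=0$ is then satisfied only approximately, with no mechanism in your argument to correct the error. This objection already applies in the one--dimensional case needed for the Geiges projection, where the slice $\xi_{f(p)}\setminus\{0\}$ has empty interior in $T_{f(p)}M$; so the invocation of convex integration is not ``direct'' there either. Handling such constrained (non-open) relations requires genuinely different tools.

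For context, the paper does not prove Proposition \ref{prop:legendrianhPrinciple}; it quotes it as the classical Gromov--Duchamp theorem. The standard proofs go through microflexibility and the sheaf-theoretic method, or reduce a Legendrian immersion to an isocontact immersion of a neighbourhood of the zero section of $J^1(N,\R)$ (this is the route in \cite{EM}), or, in the case of Legendrian curves, use explicit front-projection constructions (zigzags/stabilizations, cf. \cite{Ada,Ge}), which is the spirit of the Geiges-projection arguments used elsewhere in this paper. If you want to salvage a convex-integration-flavoured argument for curves, you would have to work in the front or Lagrangian projection, where the Legendrian condition becomes holonomic-type data or an area/exactness constraint, and supply the corresponding corrections explicitly --- none of which is contained in the proposal as written.
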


What this means precisely could be spelt out explicitely much in the fashion of Proposition \ref{prop:main}. This auxiliary result will be enough for the lower dimensional cells, but for the top ones we will need some adjustments. We break down the proof of Proposition \ref{prop:main} in several steps.

\textsl{Step I.} The image of $M = \R^3 \times (-\varepsilon, \varepsilon)$ under the Geiges projection $\pi_\Geiges$ is $V= \R^2 \times (-\varepsilon, \varepsilon)$ with coordinates $(x,z,t)$. Horizontal immersions descend to legendrian immersions for the standard contact structure $\xi = \ker(dz-tdx)$. In particular, tangencies with $\SW$ upstairs are in correspondence with tangencies downstairs with $\langle \partial_t \rangle$. From this, it follows that, whenever $\psi_k$ is horizontal and generic ($k \in \Op(A)$ or $s \in \Op(B)$), $\pi_\Geiges \circ \psi_k$ is in general position with respect to the legendrian foliation given by $\langle \partial_t \rangle$, and thus the singularities of its front are generic. Do note that, since we work with higher dimensional families, singularities more complicated than cusps do appear.

Let us denote $\Leg(V, \xi)$ for the legendrian immersions of the interval $\Op([0,1])$ into $(V, \xi)$ and $\FLeg(V, \xi)$ for its formal counterpart. Much like in the case of horizontal immersions, a formal legendrian immersion is a pair comprised of a map into $V$ and a monomorphism into $\xi$ (in this case, both with domain the interval). 

Since $d\pi_\Geiges$ maps $\SD$ isomorphically onto $\xi$, the Geiges projection yields a family
\begin{align*}
\Psi^0_k = \pi_\Geiges \circ \psi_k \in &\quad \FLeg(V, \xi), & k \in \Op(\D^m), \\
\Psi^0_k \in &\quad \Leg(V, \xi), & k \in \Op(A),
\end{align*}
which is already legendrian for $s \in \Op(B)$. By Proposition \ref{prop:legendrianhPrinciple}, $\Psi_k^0$ is homotopic, relative to $A$ and $B$, to a family $\Psi_k^{1/2} \in \Leg(V, \xi)$ for all $k$. We can further assume that the front of $\Psi_k^{1/2}$ has generic singularities as well. Denote by $\Psi_k^\delta = (\eta_k^\delta, G_k^\delta)$, $\delta \in [0,1/2]$, the homotopy as formal legendrians.

\textsl{Step II.} Let us construct a lift $\psi_k^\delta = (\gamma_k^\delta, F_k^\delta)$ of $\Psi_k^\delta$. Since $\SD$ projects to $\xi$ under the Geiges projection, we define $F_k^\delta$ to be the unique lift of $G_k^\delta$. For $\gamma_k^\delta$, let us focus first on the case where $B$ is empty or $\{0\}$. Define its $y$--coordinate $y^\delta_k(s)$, for $(k,s) \in (\D^m \times [0,1]) \cup (\Op(A) \times \Op([0,1]))$, to be given by:
\[ y^\delta_k(s) = y^0_k(0) + \int_{\eta_k^\delta|_{[0,s]}} zdx. \]
In the complement, we extend $\gamma_k^\delta$ by interpolating back to $\gamma_k^0$. This construction guarantees $\gamma_k^\delta = \gamma_k^0$ for $k \in \Op(A)$.

\textsl{Step III.} If $B = \{0,1\}$, defining $y_k^\delta$ by integration means that the $y$--coordinate of $\gamma_k^\delta$ will not necessarily agree with that of $\gamma_k^0$ at $s=1$, as it should. The idea is to deform $\eta_k^{1/2}$ to yield a new Geiges projection $\eta_k^1$ having this integral adjusted. Note that we cannot do wild deformations: for a legendrian not to escape the local model $V = \R^2 \times (-\varepsilon, \varepsilon)$, its front must have a slope bounded in terms of $\varepsilon$. Instead, we introduce type I Reidemeister moves to add or substract area.

Recall that the front of $\eta_k^{1/2}$ has generic singularities. In particular, given any point $k \in \D^m$, there is $s_k$ such that the curve $s \to \eta_k^{1/2}(s)$ is not tangent to $\langle \partial_t \rangle$ at time $s_k$. It follows that we can find a small disk $\SU_k \subset \D^m$ containing $k$ and an interval $I_k \subset [0,1]$ containing $s_k$ such that the curves $s \to \psi_{k'}^{1/2}(s)$, $(k',s) \in \SU_k \times I_k$, are transverse to $\langle \partial_t \rangle$ and therefore their front projection is an interval without cusps. By compactness, a finite number of open subsets $\SU_k$ disjoint from $A$ cover $\D^m \setminus \Op(A)$.

Given any even integer $N$, find an ordered sequence of times $s_k^1,..,s_k^N \in I_k$ and a width $\epsilon > 0$ such that the segments $[s_k^j-\epsilon, s_k^j+\epsilon] \subset I_k$ do not overlap. We construct $\eta_k^1$ as follows. Replace the front of the curves $\eta_k^{1/2}|_{[s_k^j-\epsilon, s_k^j+\epsilon]}$, for $k \in \SU_k$, by adding a ``Reidemeister I'' loop such that the sign of the area it encloses is given by the parity of $j$. Modify the fronts of $\eta_k^{1/2}$, for $k \in \Op(\SU_k) \setminus \SU_k$, so that they transition, through Reidemeister I moves, from agreeing with those of $\eta_k^{1/2}$ in $\partial\Op(\SU_k)$ to agreeing with those of $\eta_k^1$ in $\SU_k$. Denote by $\eta_k^\delta$, $\delta \in [1/2,1]$, the corresponding legendrian homotopy.

A remark is in order. The slopes of the fronts of $\eta_k^\delta$, $\delta \in [1/2,1]$, can be assumed to remain arbitrarily close to those of $\eta_k^{1/2}$; in particular, the deformation does not escape the Darboux ball $M$. In particular, we can find a bound, independent of $N$ but depending on how much we want to $C^0$--approximate $\eta_k^{1/2}$, for how large the areas enclosed by the Reidemeister I loops can be. This implies that we can adjust $N$ and the size of the loops to modify the area to be exactly the amount we require.

Since $\eta_k^\delta$ is legendrian for $\delta \in [1/2,1]$, its tangent map extends $G_k^\delta$ to the whole of $\delta \in [0,1]$. $G_k^\delta$ lifts to $F_k^\delta$ as above. We define $\psi_k^1$ (or rather its $y$--coordinate) by integrating $zdx$ over $\eta_k^1$. Since the $\SU_k$ cover $\D^m$, we have that for all $k \in \D^m$ this integral can be adjusted to ensure $\psi_k^1(1) = \psi_k^0(1)$. We define the $y$--coordinate of $\psi_k^\delta$ by lifting it arbitrarily but relative to $s=0,1$. \hfill{$\Box$}

An immediate consequence is an extension of the Adachi--Geiges result to any Engel manifold:

\begin{corollary}
Let $(M, \SD)$ be an Engel manifold and let $\gamma_1,\gamma_2 \in \HI^\niso(\SD)$ be two horizontal loops. Then, they are isotopic as horizontal loops if and only if they are homotopic as maps and they have the same rotation number.
\end{corollary}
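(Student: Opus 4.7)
The plan is to deduce the corollary directly from Theorem~\ref{thm:main} at the level of $\pi_0$, combined with the description of $\pi_0(\FHI(\SD))$ provided in Remark~\ref{rem:formal}. The only if direction is the easy one: an isotopy through horizontal loops is in particular a homotopy of maps $\NS^1 \to M$ which can be promoted to a homotopy of formal horizontal immersions, and hence preserves the rotation number invariant of Remark~\ref{rem:formal}.

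For the if direction, I would argue as follows. By Theorem~\ref{thm:main}, the inclusion $\HI^\niso(\SD) \hookrightarrow \FHI(\SD)$ is a weak homotopy equivalence, so in particular it induces a bijection
\[ \pi_0(\HI^\niso(\SD)) \;\longrightarrow\; \pi_0(\FHI(\SD)). \]
Thus two horizontal loops $\gamma_1, \gamma_2 \in \HI^\niso(\SD)$ are isotopic through horizontal loops if and only if the formal horizontal immersions $(\gamma_1, \gamma_1')$ and $(\gamma_2, \gamma_2')$ lie in the same path component of $\FHI(\SD)$.

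Next, I would unpack $\pi_0(\FHI(\SD))$ using the fibration $\FHI(\SD) \to \Maps(\NS^1, M)$ of Remark~\ref{rem:formal}, whose fibre over $\gamma$ is $\Mon(T\NS^1, \SD|_\gamma)$. A path in $\FHI(\SD)$ from $(\gamma_1, \gamma_1')$ to $(\gamma_2, \gamma_2')$ descends to a homotopy of maps between $\gamma_1$ and $\gamma_2$; conversely, given such a homotopy, the obstruction to lifting it to $\FHI(\SD)$ with prescribed endpoints lies in $\pi_0$ of the fibre (after transporting one endpoint along the homotopy). Since the fibre $\Mon(T\NS^1, \SD|_\gamma)$ is homotopy equivalent either to $\NS^1$ or to its double cover $\NS^1$ (depending on whether $\SD|_\gamma$ is orientable), $\pi_0$ of the homotopy space of sections is classified by the integer measuring the turning of $\gamma_1'$ relative to the transported framing $\gamma_2'$, which is precisely the difference of rotation numbers. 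Hence $(\gamma_1, \gamma_1')$ and $(\gamma_2, \gamma_2')$ are in the same component of $\FHI(\SD)$ exactly when $\gamma_1$ and $\gamma_2$ are freely homotopic and have the same rotation number.

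The proof is essentially bookkeeping once Theorem~\ref{thm:main} is in hand; the only real subtlety is handling the case in which $\SD$ fails to be orientable along the $\gamma_i$, but as noted at the end of Remark~\ref{rem:formal} the rotation number is still a well-defined $\Z$-valued invariant in that setting, so no new idea is required. Combining the two bijections above yields the corollary.
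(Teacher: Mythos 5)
Your argument only proves the weaker statement that $\gamma_1$ and $\gamma_2$ lie in the same path component of $\HI^\niso(\SD)$, i.e.\ that they are homotopic \emph{through horizontal immersions}. The corollary is meant as the extension of the Adachi--Geiges classification of horizontal knots (Proposition \ref{prop:Geiges}) to arbitrary Engel manifolds: ``isotopic as horizontal loops'' asks for a connecting family of \emph{embedded} horizontal loops, and your proposal never addresses self--intersections along the connecting family. This is the genuine content of the corollary beyond Theorem \ref{thm:main}, and it is not automatic: you cannot invoke general position in the space of all immersions, because horizontality is a non--generic constraint and every perturbation must be carried out inside the space of horizontal curves. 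The missing step is the one the paper actually carries out after citing Theorem \ref{thm:main}: one covers the connecting family by Darboux charts and works with the Geiges projection, under which a double point of a horizontal curve corresponds to a self--tangency of the projected (Legendrian) curve together with an area condition (the $y$--coordinates, recovered as $\int z\,dx$, must also match). After a generic perturbation in the spirit of Theorem \ref{thm:generic}, such self--tangencies are isolated in a $1$--parametric family, and one then adds or subtracts area near them so that the area condition fails and they do not lift to actual intersection points. Without some argument of this kind, a path in $\HI^\niso(\SD)$ between two embedded loops may well pass through immersed loops with double points, and the conclusion of the corollary is not established.

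Two smaller remarks. First, the formal bookkeeping you do (the ``only if'' direction, and the identification of $\pi_0(\FHI(\SD))$ via the fibration of Remark \ref{rem:formal}) is fine and coincides with the first line of the intended proof, so that part can be kept. Second, your claim that the rotation number remains a well--defined $\Z$--valued invariant when $\SD$ is non--orientable along the loops is not what the paper asserts: Remark \ref{rem:formal} computes $\pi_0(\FHI(\SD))$ only under a choice of global framing, and the comment in the paper about integer turns in the non--orientable situation concerns the developing map of curves tangent to $\SW$, not the rotation number of formal immersions; in the non--orientable case $\pi_0$ of the fibre $\Mon(T\NS^1,\SD|_\gamma)$ is not $\Z$, so the statement should be read (as in Proposition \ref{prop:Geiges}) with orientability along the curves understood.
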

\begin{proof}
Apply Theorem \ref{thm:main} to obtain a connecting family of immersions. One can then proceed in a cover by Darboux charts, much like in Theorem \ref{thm:generic}, in which intersection points, under the Geiges projection, appear as self--tangencies satisfying an area condition. Generically, curves with self--tangencies can be assumed to be isolated in a $1$--parametric family. By adding or substracting area around said points, they can be assumed not to lift to intersections.
\end{proof}

\section{Curves tangent to $\SW$ and their deformations} \label{sec:shortCurves}

In Subsection \ref{ssec:BH} we showed that the $h$--principle for horizontal immersions does fail, in general, in the presence of closed orbits of the kernel of the Engel structure. This motivated us to restrict, in Section \ref{sec:main}, to the subclass of curves that are not everywhere tangent to the kernel. Going in the opposite direction, in this section we explore the phenomenon of rigidity in more detail.


Let us explain our setup. Take the standard $(\R^3, \xi = \ker(dy-zdx))$ and let $\phi: (\R^3, \xi) \to (\R^3, \xi)$ be a contactomorphism that fixes the origin. Think about the mapping torus $M_\phi$ as the quotient $\R^3 \times [0,1]/ \phi$ with coordinates $(x,y,z,t)$. $M_\phi$ can be endowed with a natural even contact structure: the pull--back of $\xi$, whose kernel is spanned by $\partial_t$. An Engel structure $\SD = \langle \partial_t, L\rangle$ can be defined on $M_\phi$, where $L \subset \xi$ is some $t$--dependent Legendrian vector field rotating positively in the $t$--direction and satisfying $\langle \phi^*(L(0))\rangle = \langle L(1) \rangle$.

Fix a framing $\langle X = \partial_x + z\partial_y, Z = \partial_z \rangle$ of $\xi$. Fix $L(0) = X$. We write $L(1)$ as $\cos(F(x,y,z))X + \sin(F(x,y,z))Z$, where $F: \R^3 \to \R$ is the smallest possible such function that is still positive. $F$ can be extended to the whole mapping torus to define a possible $L$: 
\[ f: \R^3 \times [0,1] \to \R \]
\[ f|_{\R^3 \times \{1\}} = F, \qquad f|_{\R^3 \times \{0\}} = 0, \qquad \partial_t f > 0, \]
\[ L = \cos(f(x,y,z,t))X + \sin(f(x,y,z,t))Z. \]
Therefore, the structure equations for the Engel structure $\SD$ read as:
\[ \alpha = dy-zdx, \qquad \beta = \cos(f(x,y,z,t))dz - \sin(f(x,y,t,z))dx. \]

Consider the $\SW$--integral curve $\gamma(\theta)=(0,0,0,\theta)$. Any $C^1$--small deformation of $\gamma$ is of the form $\eta(\theta) =  (x(\theta), y(\theta), z(\theta), \theta)$, and satisfies the equations:
\begin{eqnarray*}
\tan(f(x,y,z,t)) &= & \frac{z'}{x'}, \\
y(t)-y(0) &= & \int_0^{t} zx' ds, \\
\phi(\eta(1)) & =& \eta(0).
\end{eqnarray*}
We say that the plane curve $\pi \circ \eta(\theta) =  (x(\theta), z(\theta))$ is the front of $\eta$. These formulas in particular describe how to recover $\eta$ from its front. Using this language, we can provide a more geometrical proof of Proposition \ref{prop:BH2}.

\begin{proof}[Alternate proof of Proposition \ref{prop:BH2}]
Given a $C^1$--perturbation $\eta$ of a $\SW$--tangent curve $\gamma$, we want to show that $\eta$ is tangent to $\SW$ as well. Suppose otherwise; by Theorem \ref{thm:generic}, we can assume that $\eta$ is in general position with respect to $\SW$. We can find a neighbourhood of $\gamma$ that is a mapping torus $M_\phi$ with $\phi$ the identity and $L(1) = -L(0)$; we are in the setup above, with $f(x,y,z,t) = \pi t$. The front $\pi \circ \eta$ is a closed plane curve with cusps. It must possess at least one cusp and, choosing our neighbourhood suitably, $\pi \circ \eta$ has, at $\theta=0$, a cusp pointing to the left located at the origin. The first equation above states that the slope of $\eta$ rotates clockwise $\pi$ degrees, and thus the curve is piecewise convex. The second one says that the signed area bounded by $\eta$ must be zero.

Observe that the number of cusps must be odd since the oriented slope approaching $t=\pi$ must be horizontal and pointing to the left and at every cusp the orientation changes sign. Denote the values of the parameter for which the curve has a cusp by $\{t_0=0=\pi, t_1, \ldots, t_{2n} \}$. Since the slope is only horizontal at the endpoints, the cusps are alternating; that is, at $t_{2i-1}$ the curve leaves the horizontal line $\{ z= z(t_{2i-1}) \}$ going downwards and at $t_{2i}$ it leaves it going upwards. In other words, the function $z(t)$ is strictly increasing in the intervals $(t_{2i}, t_{2i+1})$ and strictly decreasing otherwise. We deform the curve by {\em enlarging the cusps}: Except for the one at the origin, we add a straight segment to the end of each of the cusps and then we make it convex by a slight deformation (like glueing a thickened needle to its end). This procedure allows us, without changing the total area, to push upwards the odd cusps and push downwards the even ones. Therefore, for $i>0$:
\begin{eqnarray*}
z(t_{2i-1})&=& z(t_1) > 0, \\
z(t_{2i}) &=& z(t_2) < 0.
\end{eqnarray*}

Consider the segments $\pi \circ \eta|_{(t_{2i-1}, t_{2i})}$ and $\pi \circ \eta|_{(t_{2i}, t_{2i+1})}$, and reverse the parametrisation of the former. Then, both of them are segments starting from $\pi \circ \eta(t_{2i})$ and finishing in the same $z$--coordinate, but the latter has greater slope. This reasoning readily implies that:
\[ x(t_1) > x(t_3) > \cdots > x(t_{2n-1}), \]
\[ x(t_2) < x(t_4) < \cdots < x(t_{2n}). \]
In particular, the segments $\pi \circ \eta|_{(t_{2i-1}, t_{2i})}$ and $\pi \circ \eta|_{(t_{2i+1}, t_{2i+2})}$ intersect at a point $s_i$. This means that inbetween $t_{2i-1}$ and $t_{2i+2}$ a \textsl{Reidemeister I move} configuration appears, bounding some positive area. Refer to Figure \ref{fig:BH2}.

\begin{figure}[ht] 
\centering
\includegraphics[scale=0.65]{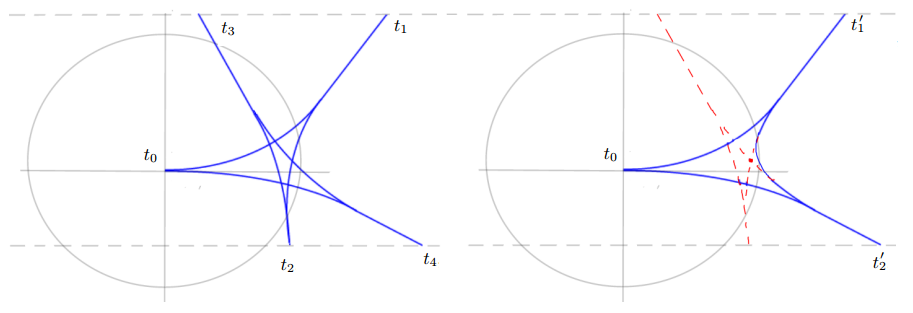}
\caption{On the left hand side, a possible projection for a deformation $\eta$ with five cusps. On the right hand side, we outline in red the area of the Reidemeister I loop, that has been removed, yielding a curve with only three cusps. Note that the cusps have been made longer so that they would reach the horizontal gray lines.}
\label{fig:BH2}
\end{figure}

Now we conclude by induction on $2n+1$, the number of cusps. Our induction hypothesis is that a front conforming to the properties above must bound positive area. This is straightforward for $2n+1=3$. For the induction step, the reasoning on the previous paragraph shows that, for $2n+1>3$, a Reidemeister I move appears. By removing it (along with the points $t_{2i}$ and $t_{2i+1}$) and smoothing the curve at $s_i$, the points $t_{2i-1}$ to $t_{2i+2}$ are now connected by a segment with no cusps. Since the area under this operation decreases and now the number of cusps is $2n-1$, the induction hypothesis concludes the proof.
\end{proof}

On the other hand, we now describe two examples of ``short'' $\SW$--orbits that admit deformations not everywhere tangent to $\SW$. These models can be inserted into Cartan prolongation by deforming $\SW$ using some contact vector field. 

\begin{example}[Curves making one projective turn.]
Take the mapping torus $M_\phi$ with $\phi(x,y,z)= (x, y/2, z/2)$. Let $\eta$ be the desired deformation, which we assume is in general position with respect to $\SW$. Its front $\pi \circ \eta(\theta) = (x(\theta),z(\theta))$ satisfies $(x(0), z(0)) = (x(1), 2z(1)) = (0,0)$ and encloses an area of $y(1)/2$. On the left hand side of Figure \ref{fig:deformation}, such a curve is presented; it is clear that the area it bounds can be adjusted to be exactly $y(1)/2$.\hfill{$\Box$}
\end{example}

\begin{figure}[ht] 
\centering
\includegraphics[scale=0.55]{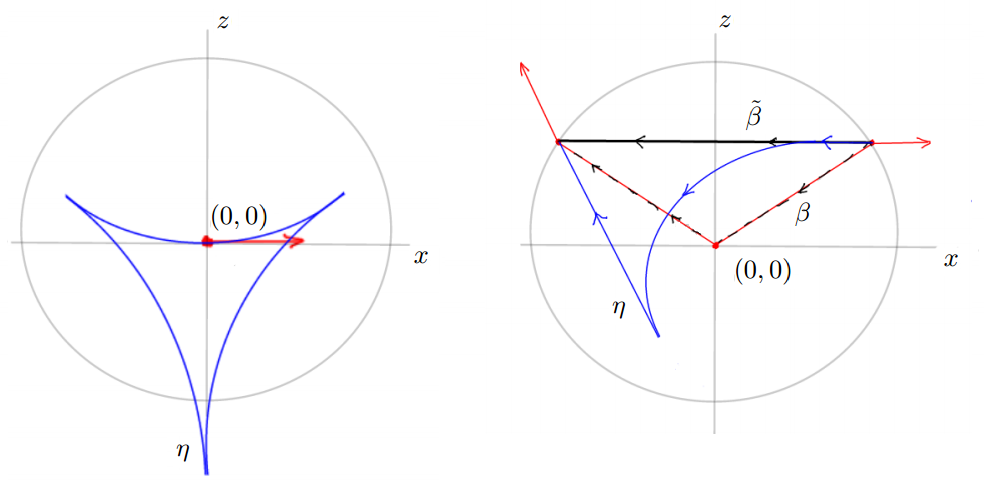}
\caption{On the left hand side, a possible deformation for a curve making one projective turn. On the right, a deformation for a curve with short developing map. The curves are depicted in blue. The tangent vectors at $t=0,1$ are shown in red.}
\label{fig:deformation}
\end{figure}

\begin{example}[Curves having an arbitrarily short developing map.]
Fix some angle $\alpha \in (0,\pi)$. The following contactomorphism is the lift of the turn of angle $-\alpha$ in the plane $(x,z)$:
\[ \psi(x,y,z)= (\cos(\alpha)x+\sin(\alpha)z, y - \sin^2(\alpha)zx + \dfrac{1}{2}\cos(\alpha)\sin(\alpha)(z^2-x^2), \cos(\alpha)z-\sin(\alpha)x). \]
We consider the mapping torus of $\psi$. 

Take a deformation $\eta$ ending at $(x(1), y(1), z(1))$. The projection $\pi\circ\eta$ must bound a signed area of 
\[ y(1) - y(0) = \sin^2(\alpha)z(1)x(1) - \dfrac{1}{2}\cos(\alpha)\sin(\alpha)[z(1)^2-x(1)^2]. \]
The right hand side is precisely the integral of $zdx$ over the curve $\beta$ given by going from $(x(0),z(0))$ to the origin and then to $(x(1),z(1))$ following straight lines, as a computation shows.

Consider $(x(1),z(1))$ lying in the first quadrant and making an angle of $\alpha/2$ with the vertical axis. Let $\tilde\beta$ be the straight horizontal segment connecting $(x(0),z(0))$ and  $(x(1),z(1))$. In particular, it lies above $\beta$ and thus $\int_\beta zdx > \int_{\tilde\beta} zdx$. Now it is straightforward to create a curve $\eta$ such that $\int_\eta zdx = \int_\beta zdx = y(1)-y(0)$ by adding some (positive) area to $\tilde\beta$. Refer to the right hand side of Figure \ref{fig:deformation}.\hfill{$\Box$}
\end{example}

Slightly generalising the first example, it is not hard to show that:
\begin{proposition} \label{prop:deformations}
Let $\phi$ be a contactomorphism of $(\R^3, \xi = \ker(dy-zdx))$ fixing the origin and with conformal factor different from $1$. Let $M_\phi$ be the corresponding mapping torus with coordinates $(x,y,z,t)$ and endowed with the Engel structure with smallest turning. Then, the $\SW$--curve $\gamma(\theta) = (0,0,0,\theta)$ admits deformations somewhere not tangent to $\SW$.
\end{proposition}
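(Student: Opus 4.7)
The plan is to adapt the construction of the first example directly, using crucially that the conformal factor $c := h(0) \neq 1$ provides a degree of freedom to match the $y$-coordinate closing condition. Write $d\phi_0$ for the linearization of $\phi$ at the origin. Since $\phi^*\alpha = h\alpha$ with $\alpha = dy - zdx$ and $\alpha|_0 = dy$, one has $(d\phi_0)^*(dy) = c\,dy$, so
\[
d\phi_0 = \begin{pmatrix} a & p & b \\ 0 & c & 0 \\ e & q & f \end{pmatrix}, \qquad af - be = c,
\]
the last identity being forced by $\phi^*d\alpha = d(h\alpha)$. The angle $F(0,0,0) \in (0,\pi]$ through which the Legendrian field $L$ turns along $\gamma$ is determined by $\langle L(1)|_0\rangle = \langle (d\phi_0)^{-1}(X)|_0\rangle$.

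I would parametrize candidate deformations $\eta(\theta) = (x(\theta), y(\theta), z(\theta), \theta)$ tangent to $\SD$ by their front $\pi\circ\eta(\theta) = (x(\theta), z(\theta))$, with the $y$-coordinate recovered from the Legendrian integral $y(\theta) = y(0) + \int_0^\theta z x'\,ds$. Linearizing the closing condition $\phi(\eta(1)) = \eta(0)$ at the origin, its $y$-component reads $c\,y(1) + p\,x(1) + q\,z(1) = y(0)$, which combined with $y(1) - y(0) = A := \int_0^1 z x'\,ds$ and the hypothesis $c \neq 1$ admits a unique solution for $y(1)$ as an affine function of $(x(1), z(1))$ and $A$. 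The key consequence is that the $y$-part of the closing condition is automatically satisfiable for any front, by a suitable choice of $y(0)$; in particular it imposes \emph{no} constraint on the area $A$.

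Next I would build the front. The $(x,z)$-components of the closing condition give the linear equation $(x(0), z(0)) = d\phi_0|_{\xi_0}(x(1), z(1))$, supplemented by the tangency conditions: horizontal tangent at $\theta = 0$, tangent at angle $F(0,0,0)$ at $\theta = 1$, and monotone rotation through angle $F(0,0,0)$ in between. For any fixed endpoint $(x(1), z(1))$ small enough, a specific such front is easy to write down; and exactly as in the first example, the area $A$ may be adjusted to any sufficiently small prescribed value by a local deformation of the front, for instance by inserting a Reidemeister-I-like loop on a segment where the tangent is nonvertical, in the spirit of Step III of the proof of Proposition \ref{prop:main}.

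Finally, the above construction solves the closing condition for the linearization $d\phi_0$ in place of $\phi$. To promote it to a genuine deformation for $\phi$ itself, I would scale the front by a small parameter $\lambda$: the endpoint scales as $\lambda$, the area $A$ as $\lambda^2$, and the nonlinear corrections to $\phi$ and to $f$ are of order $\lambda^2$, so a standard implicit function argument applied to the closing map $\eta \mapsto \phi(\eta(1)) - \eta(0)$ yields, for all small enough $\lambda$, an exact deformation of $\gamma$ whose $(x,z)$-motion is nontrivial and which is therefore not everywhere tangent to $\SW$. The main obstacle is verifying the last step, which amounts to checking that the linearized solutions depend smoothly on the parameters $(x(1), z(1))$ and $A$; this follows from their explicit construction in the second and third steps.
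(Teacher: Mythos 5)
Your proposal is essentially the paper's argument: reduce to the linearisation of $\phi$ at the origin (the paper does this by zooming with $(x,y,z)\mapsto(\lambda x,\lambda^2 y,\lambda z)$, you by a scaling plus implicit function theorem, which amounts to the same reduction), and then use the conformal factor $c\neq 1$ to solve the $y$--closing condition by an appropriate choice of $y(0)$, so that any nonconstant admissible front closing up in $(x,z)$ produces a deformation not everywhere tangent to $\SW$. One step as written is wrong, though happily dispensable: in this mapping--torus model the tangent direction of the front at time $\theta$ is \emph{prescribed} by the angle $f(\cdot,\theta)$ (only the speed, including its sign, is free -- this is precisely the source of rigidity in Proposition \ref{prop:BH2}), so you cannot insert a Reidemeister--I loop as in Step III of Proposition \ref{prop:main}, since that would force the tangent line to make an extra turn; but by your own observation the $y$--equation imposes no constraint on the area $A$ when $c\neq 1$, so no area adjustment is needed and that step should simply be deleted. (Also, with your matrix the linearised $y$--closing condition is just $c\,y(1)=y(0)$, without the $p\,x(1)+q\,z(1)$ terms; this does not affect the conclusion.)
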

\begin{proof}
Take $d_0\phi$, the linearisation at the origin. $d_0\phi|_\xi$ is a linear map in $\R^2$ that can be lifted to a contactomorphism $\tilde\phi$. By zooming in with the contactomorphism $(x,y,z) \to (\lambda x, \lambda^2 y, \lambda z)$, $\phi$ becomes $C^\infty$ close to $\tilde\phi$, and therefore it is enough to prove the statement for $\phi$ linear.

If the conformal factor at the origin is different from $1$, there is a dilation in the $y$--coordinate. Then we construct a deformation starting and finishing at the origin and bounding an area $y(1)-y(0)>0$, which is possible if we select $y(0)$ small enough and with the adequate sign.
\end{proof}

Let us elaborate on an interesting consequence of Proposition \ref{prop:deformations}. Observe that, at a linear level, contactomorphisms fixing the origin and having conformal factor different from $1$ are generic. This readily implies a Kupka--Smale type of theorem for kernels of even--contact structures. Let us spell it out:

\begin{theorem} \label{thm:main2}
A $C^\infty$--generic even--contact structure has isolated $\SW$--orbits having Poincar\'e map not a strict contactomorphism.

The same holds for a generic Engel structure. In particular, the inclusion $\pi_0(\HI(\SD)) \to \pi_0(\FHI(\SD))$ is a bijection if $\SD$ is $C^\infty$--generic.
\end{theorem}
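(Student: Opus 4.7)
The strategy combines a Kupka--Smale-type transversality argument with Proposition \ref{prop:deformations} and Theorem \ref{thm:main}. First I will establish the genericity statement for even--contact structures by working orbit by orbit, ordered by period. For each $T > 0$, let $\mathcal{E}_T$ denote the set of even--contact structures for which every closed orbit of $\SW$ of period at most $T$ is isolated and has Poincar\'e return map with conformal factor different from $1$. This is an open condition. To prove density, fix an arbitrary $\SE$ and a closed $\SW$--orbit $\gamma$ of period at most $T$: the mapping torus description preceding Proposition \ref{prop:deformations} gives a complete local model for $\SE$ in a tubular neighbourhood of $\gamma$ in terms of a contactomorphism $\phi$ of $(\R^3,\ker(dy-zdx))$ fixing the origin. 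Compactly supported perturbations of $\SE$ inside this neighbourhood correspond exactly to compactly supported perturbations of $\phi$ as a contactomorphism germ. Since both the condition ``$d_0\phi$ is non--degenerate'' (isolation) and ``$d_0\phi$ has conformal factor $\ne 1$'' are of positive codimension in the linear part of $\phi$, both can be realised by arbitrarily small perturbations of $\SE$. A standard Baire argument applied to $\bigcap_{n\in\mathbb{N}} \mathcal{E}_n$ then produces the desired residual set.

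The Engel case is completely analogous: one perturbs $\SD$ instead of $\SE$, noting that the Engel condition is open in the $C^2$--topology, so the perturbed $\SD$ remains Engel. Every compactly supported perturbation of the Legendrian line field $L(t)$ in the mapping torus model gives a perturbation of $\SD$ inducing the prescribed modification of $\SE=[\SD,\SD]$ and of its kernel $\SW$, and the parametric flexibility provided by \cite{CPPP} ensures there are no global obstructions. One must only verify that the iterative perturbation over orbits of increasing period does not undo the transversality already achieved for shorter periods, which is immediate from the compactly supported nature of the perturbations.

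For the $\pi_0$ statement, fix a generic $\SD$. Proposition \ref{prop:deformations} implies that every closed $\SW$--orbit admits a $C^1$--small horizontal deformation that is not everywhere tangent to $\SW$, so the inclusion $\HI^{\niso}(\SD) \hookrightarrow \HI(\SD)$ is surjective on $\pi_0$. Combined with the bijection $\pi_0(\HI^{\niso}(\SD))\to\pi_0(\FHI(\SD))$ given by Theorem \ref{thm:main}, this yields surjectivity of $\pi_0(\HI(\SD))\to\pi_0(\FHI(\SD))$. For injectivity, take two horizontal loops $\gamma_1,\gamma_2\in\HI(\SD)$ in the same formal class: deform each within $\HI(\SD)$ to a nearby $\tilde\gamma_i \in \HI^{\niso}(\SD)$ via Proposition \ref{prop:deformations} (the $C^1$--small deformations are formally homotopic to the originals), and then connect $\tilde\gamma_1$ to $\tilde\gamma_2$ by a path in $\HI^{\niso}(\SD) \subset \HI(\SD)$ using Theorem \ref{thm:main}.

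The main obstacle is step one: one must confirm that compactly supported perturbations of $\SE$ inside a flow--box around a closed orbit realise \emph{arbitrary} perturbations of the Poincar\'e germ $\phi$ within the contactomorphism group. This surjectivity at the level of jets follows from the explicit mapping torus parametrisation described before Proposition \ref{prop:deformations}, where one freely prescribes the gluing contactomorphism $\phi$; the rest is a routine Baire argument. The subtlety is purely that one is working inside the constrained class of even--contact (resp. Engel) structures rather than among arbitrary line fields, but the mapping torus model shows this class is flexible enough to reproduce the classical genericity statements.
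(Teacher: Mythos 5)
Your overall strategy is the same as the paper's (a Kupka--Smale/Baire argument making return maps of $\SW$--orbits generic via local perturbations in the mapping--torus model, followed by Proposition \ref{prop:deformations} and Theorem \ref{thm:main} for the $\pi_0$ statement, which you carry out correctly), but the density step contains a genuine gap. Perturbing ``orbit by orbit'' is not available at the outset: before genericity is achieved the closed $\SW$--orbits of period at most $T$ need not be isolated or finite in number --- in a Cartan prolongation \emph{every} $\SW$--orbit is closed --- so there is no enumeration of orbits to induct over. Moreover, the real danger is not that a later perturbation ``undoes'' earlier transversality, but that a compactly supported perturbation near an orbit of period $\tau$ creates \emph{new} degenerate closed orbits of period roughly $N\tau\leq T$ passing through the support; compact support does nothing to prevent this, so your claim that the iteration closes up ``immediately'' is unjustified. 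This is precisely the difficulty the paper confronts by invoking Peixoto's scheme \cite{Pe}: one proceeds by induction on the period, starting near the minimal period and using progressively smaller perturbations, so that newly created orbits have period exceeding the range already treated. You would also need an analogue of the paper's Lemma \ref{lem:genContacto} (genericity of non--degenerate fixed points among contact maps, proved there by representing a contact map as a Legendrian graph and applying Thom transversality) rather than the bare assertion that the bad conditions on $d_0\phi$ have positive codimension.

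The reduction to the Engel case is also flawed as written. Perturbing the Legendrian line field $L(t)$ inside the mapping torus leaves $\SE=\langle\partial_t\rangle\oplus\xi$ and hence $\SW$ completely unchanged (as long as the rotation condition persists), so it cannot modify any Poincar\'e return map; the appeal to the flexibility of \cite{CPPP} is beside the point, since only a $C^\infty$--small perturbation within the open Engel condition is needed. The correct mechanism, which is what the paper's one--line remark encodes, is to realise a prescribed small perturbation of $\SE$ (say with new kernel $\langle\partial_t+X_t\rangle$ for a small contact vector field $X_t$) by tilting the $\SW$--leg of $\SD$, i.e.\ replacing $\SD=\langle\partial_t,L\rangle$ by $\SD'=\langle\partial_t+X_t,L\rangle$, which is Engel with $[\SD',\SD']$ the perturbed even--contact structure by openness of the rotation condition. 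With these two repairs --- Peixoto's induction on the period and the correct realisation of perturbations of $\SE$ by perturbations of $\SD$ --- your argument coincides with the paper's proof.
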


A key ingredient in the proof will be the following standard fact, whose proof we recall:

\begin{lemma} \label{lem:genContacto}
Let $(\D^3, \xi = \ker(\alpha))$ be the standard contact Darboux ball. Consider the space of maps from $\D^3$ to itself that are contact (but not necessarily bijective). Then, the subset of those having non--degenerate fixed points is open and dense.
\end{lemma}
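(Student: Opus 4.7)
The proof splits into openness and density.

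Openness: at a non-degenerate fixed point $p$ the operator $d\phi_p - \Id$ is invertible, so by the implicit function theorem the fixed point persists (and remains non-degenerate) under any $C^1$-small perturbation. Since $\D^3$ is compact and non-degenerate fixed points are isolated, there are only finitely many; away from them $|\phi(q) - q|$ is bounded below, preventing new fixed points from appearing after a $C^0$-small perturbation.

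For density, given a contact map $\phi:\D^3 \to \D^3$, I would consider perturbations of the form $\phi_H^1 \circ \phi$, where $\phi_H^t$ denotes the time-$t$ flow of the contact vector field $X_H$ associated to a contact Hamiltonian $H \in C^\infty(\D^3, \R)$. For $H$ small this is a contact map $C^\infty$-close to $\phi$, and the one-to-one correspondence between contact vector fields and scalar Hamiltonians makes this a ``complete'' family of perturbations. Introduce the evaluation map
\[ \Psi: \D^3 \times \mathcal{H} \to \D^3 \times \D^3, \qquad \Psi(p, H) = (p,\, \phi_H^1(\phi(p))), \]
where $\mathcal{H}$ is an appropriate Banach completion of the space of Hamiltonians. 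The fixed points of $\phi_H^1 \circ \phi$ are exactly the preimages of the diagonal $\Delta \subset \D^3 \times \D^3$ under $\Psi(\cdot, H)$, and non-degeneracy translates precisely into transversality of that preimage.

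The heart of the argument is to verify $\Psi \pitchfork \Delta$. At a point $(p, H)$ with $\phi_H^1(\phi(p)) = p$, the derivative in the $H$-direction (localized so that $h$ is supported in a small neighborhood of $p$) produces, to first order, the vector $X_h(p)$ in the second factor, which projects onto the normal direction to $\Delta$. So transversality reduces to showing that contact vector fields at a single point realize every tangent vector. This is immediate from the Darboux formula for the contact vector field associated to $H$ with respect to $\alpha = dy - zdx$: the assignment $j^1_p H \mapsto X_H(p)$ is a linear surjection onto $T_p\D^3$. Once $\Psi \pitchfork \Delta$ is established, the Sard--Smale theorem yields a residual subset of $H \in \mathcal{H}$, intersecting any neighborhood of $0$, for which $\Psi(\cdot, H) \pitchfork \Delta$; a standard Baire argument then upgrades the conclusion to $C^\infty$-generic $H$.

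The main obstacle is this transversality step, which is exactly where the contact (as opposed to merely smooth or symplectic) structure plays a role: the parametrization of contact vector fields by scalar functions supplies the flexibility needed to surject onto each $T_p\D^3$. Everything else is a routine application of the parametric transversality framework, and the compactness of $\D^3$ makes the passage from density at finite regularity to $C^\infty$-density unproblematic.
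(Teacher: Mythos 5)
Your argument is correct in outline, but it takes a genuinely different route from the paper. The paper stays entirely finite--dimensional: it lifts the contact map $\phi$ to the Legendrian graph $\Gamma_\phi(x)=(x,\phi(x),\log[\alpha/(\phi^*\alpha)])$ in the contact product $(\R^3\times\R^3\times\R,\ \ker(\pi_1^*\alpha-e^t\pi_2^*\alpha))$, notes that non--degenerate fixed points of $\phi$ correspond exactly to transverse intersections of $\Gamma_\phi$ with $\Delta\times\R$, and then applies Thom's jet transversality in charts where the Legendrian is the $1$--jet graph of a function; because the perturbations are $C^\infty$--small, the deformed Legendrian remains graphical over the first factor and is therefore again the lift of a contact map. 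Your Sard--Smale scheme with contact--Hamiltonian perturbations plays the same structural role: the pointwise surjectivity of $j^1_pH\mapsto X_H(p)$ is the exact analogue of the paper's observation that graphical Legendrian deformations exhaust the nearby contact maps. The paper's version buys finite--dimensionality (no Banach completions, no Sard--Smale) and makes the ``completeness of perturbations'' automatic; yours is the more dynamical formulation, in the spirit of the classical Kupka--Smale arguments that the paper later invokes via Peixoto in Theorem \ref{thm:main2}. One step in your write--up needs tightening: parametric transversality requires $\Psi\pitchfork\Delta$ at \emph{every} $(p,H)$ in the parameter ball, not only at the base map, and at a non--zero base $H$ the $H$--derivative is not $X_h(p)$ but the variation integral $\int_0^1 (d\phi_H^{1-t})\,X_h(\phi_H^t(q))\,dt$ along the orbit of $q=\phi(p)$; surjectivity still holds, but you must either localize $h$ in a flow box around the final crossing (taking care of recurrences and of zeros of $X_H$) or enlarge the parameter space to time--dependent Hamiltonians so that left--composition reduces the derivative at any base to $X_h(p)$. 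This is a standard and fixable wrinkle rather than a gap in the idea; the openness half and the passage from finite regularity to $C^\infty$--density are fine as you state them.
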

\begin{proof}
Consider the manifold $V = \R^3 \times \R^3 \times \R$, and let $\pi_1$ and $\pi_2$ be the projections onto its first and second factors, respectively. $V$ can be endowed with the contact structure $\ker(\lambda = \pi_1^* \alpha - e^t\pi_2^*\alpha)$; any contact map $\phi: \D^3 \to \D^3$ lifts to a legendrian $\Gamma_\phi(x) = (x,\phi(x), \log[\alpha/(\phi^*\alpha)])$, where the last term accounts for the conformal factor of $\phi$. We need for $\Gamma_\phi$ to be transversal to $\Delta \times \R$, with $\Delta \subset \R^3 \times \R^3$ the diagonal.

This follows from Thom's transversality (see, for instance, \cite[p. 17, 2.3.2]{EM}). Indeed, let $p \in \Gamma_\phi \cap (\Delta \times \R)$. Then, there is a neighbourhod $U \ni p$ contactomorphic to $J^1(\R^3, \R)$ with $\Gamma_\phi \cap U$ being taken to the zero section. Then, Thom's transversality states that a generic $C^\infty$--small deformation of $\Gamma_\phi \cap U$ (which is given as the graph of a function) is transversal to the submanifold $(\Delta \times \R) \cap U$. Proceeding chart by chart, capping the deformations off, and using progressively smaller deformations allows us to conclude. Since the deformations are $C^\infty$-small, they are graphical over the first factor of $V$, and hence give rise to a contact map.

Note that reasoning in this fashion yields the analogous result for contactomorphisms in compact contact manifolds of any dimension as well.
\end{proof}

\begin{proof}[Proof of Theorem \ref{thm:main2}.]
Fix a metric on $M$. For simplicity, focus on even--contact structures having orientable and oriented kernel. Any such $\SE$ has an associated unitary vector field $W$ spanning $\SW$ positively. Consider the subset of even--contact structures such that the $W$--orbits of length at most $T > 0$ are non degenerate. We claim that it is open and dense. We claim that it is still open and dense if we further require for the Poincar\'e return maps of the orbits to be non--strict contactomorphisms. Assuming these statements, the subset of even--contact structures such that this is true for orbits of all periods is a countable intersection of open and dense sets.

Our claims readily follow from arguments of Peixoto \cite{Pe}[p. 219-220], which we briefly sketch. Take $(M, \SE)$. Given any $W$--orbit $\gamma$ of period $\tau < T$, Lemma \ref{lem:genContacto} produces a $C^\infty$--small deformation of $W$ such that the Poincar\'e return has only isolated fixed points. However, this might produce new orbits of period $2\tau-\varepsilon \geq N\tau < T$ for some integer $N$. Therefore, one starts deforming orbits that are close to the minimal period and introduces progressively smaller deformations as the period goes up to $T$. If we additionally want the orbits not to have return map a strict contactomorphism, we take the isolated orbits we have produced and we replace their Poincar\'e return maps by their linearised version, which we then make generic. 

This concludes the proof for the statement regarding even--contact structures. We then note that any $C^\infty$--perturbation of $\SE = [\SD, \SD]$ can be realised by a $C^\infty$--perturbation of $\SD$, so we conclude that the same holds for a $C^\infty$--generic Engel structure. Having all $\SW$--orbits isolated, Proposition \ref{prop:deformations} allows us to deduce that, at least at the $\pi_0$ level, there is a complete $h$--principle for the inclusion $\HI(\SD) \to \FHI(\SD)$ if $\SD$ is generic.
\end{proof}

\begin{remark}
Theorem \ref{thm:main2} should still be true for higher $\pi_k$. This would require carefully analysing families of curves and ensuring that the model from Figure \ref{fig:deformation} can be introduced parametrically.
\end{remark}

\section{Transverse maps and immersions} \label{sec:transverse}

Having proven our results on horizontal immersions, we can study the other condition that is geometrically meaningful for a map to satisfy in the presence of a distribution: that of being transverse. We shall review Gromov's strategy for proving flexibility. This was already worked out in detail by Y. Eliashberg and N. Mishachev in \cite[p. 136]{EM} for the contact case, and indeed the proof goes through without any major differences.

\begin{theorem} \label{thm:transverse}
Let $(M, \SD)$ be an Engel manifold and $V$ be any manifold. Maps $f: V \to M$ with $df: TV \to TM \to TM/\SD$ surjective satisfy a $C^0$--close, parametric, relative, and relative to the parameter $h$--principle.
\end{theorem}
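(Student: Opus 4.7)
The plan is to follow the Gromov--Eliashberg--Mishachev strategy for transverse $h$--principles (worked out in detail for the contact setting in \cite[Ch.~13]{EM}) and to observe that only one step, corresponding to transverse $3$--dimensional immersions, is genuinely new in the Engel case; that step is handled by Theorem \ref{thm:main}. The formal data consists of a pair $(f,F)$ with $f: V \to M$ smooth and $F: TV \to f^*TM$ a bundle map whose composition with $TM \to TM/\SD$ is fibrewise surjective. Since $\mathrm{rank}(TM/\SD) = 2$, only $\dim V \in \{2,3,4\}$ are nontrivial. As in Gromov's proof I would subdivide $V$ by a fine jiggled triangulation and extend a partial solution handle by handle. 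Over handles of codimension at least one, the extension is a routine holonomic approximation in which the bracket--generating hypothesis supplies enough $\SD$--tangent directions to deform freely. For $\dim V = 2$ the transversality condition asks that $df$ be an isomorphism onto the rank--$2$ quotient $TM/\SD$, an open Smale--Hirsch type relation; for $\dim V = 4$ the number of free directions is generous and the argument is again routine.

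The genuinely new case is the top--dimensional handle when $\dim V = 3$. The transversality condition carves out, in each $T_pV$, a distinguished line $\ell_p = df_p^{-1}(\SD_{f(p)})$; its integral curves are horizontal arcs in $M$, and the restriction of $f$ to a top--dimensional $3$--handle decomposes as a $2$--parameter family of horizontal immersions of an interval into a Darboux chart. Extending a partial solution across such a handle is therefore exactly a parametric, relative, relative-to-the-parameter, $C^0$--close $h$--principle for horizontal immersions of the interval, which is precisely what Theorem \ref{thm:main} provides (see Proposition \ref{prop:main} and Remark \ref{rem:main}).

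The one subtlety --- the ``simple remark'' alluded to in the introduction --- is that Theorem \ref{thm:main} applies only in the class $\HI^\niso$. I would argue that, by a $C^0$--small perturbation of the formal data supported away from the previously constructed part, the horizontal arcs to be filled can be arranged not to be everywhere tangent to $\SW$: the surjectivity of $df$ onto $TM/\SD$ supplies a $\SD$--tangent direction that can be tilted infinitesimally off $\SW$ at at least one point per arc. Parametrically, and much as in the genericity argument of Theorem \ref{thm:generic}, this places the family of arcs inside $\HI^\niso$ compatibly with the inductive boundary conditions, so Theorem \ref{thm:main} applies and closes the induction. The main obstacle I anticipate is verifying this compatibility honestly in families, i.e.\ that the perturbation simultaneously preserves the solution constructed on the lower handles, the formal data held fixed on $\partial V$, and the relative-to-the-parameter condition along the distinguished subset of parameter space.
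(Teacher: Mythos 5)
You have conflated Theorem \ref{thm:transverse} with Proposition \ref{prop:3immersions}, and the resulting reduction does not match how this theorem is actually proved. The paper's proof does \emph{not} use Theorem \ref{thm:main} at all: it introduces the auxiliary relation $\SR_3 \subset J^1(V\times\R, M)$ of maps that are transverse to $\SD$ along $V$ and, along the fibres $\{v\}\times\R$, tangent to $\SD$ \emph{but transverse to $\SW$}. Since Theorem \ref{thm:transverse} concerns maps rather than immersions, the fibre direction can always be required transverse to $\SW$; this is the ``simple remark'' from the introduction. Local integrability is checked by flowing an extension of the prescribed vector, microflexibility follows from Lemma \ref{lem:transverseModel} (deformations of a horizontal curve transverse to $\SW$ are graphical in the $J^2(\R,\R)$ model, hence can be cut off by a bump function), and Gromov's theorem for Diff--invariant, locally integrable, microflexible relations (Proposition \ref{prop:microflex}) applied on $\Op(V\times\{0\})$ concludes, uniformly in $\dim V$. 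Theorem \ref{thm:main} enters only in Proposition \ref{prop:3immersions}, where immersivity of $V^3\times\R \to M$ forces the fibre direction to possibly hit $\SW$; the sentence about ``$3$--dimensional immersions'' in the introduction refers to that statement, not to the $\dim V = 3$ case of Theorem \ref{thm:transverse}.

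Taken on its own terms, your handle--by--handle scheme also has genuine gaps. For $\dim V=3$ the hypothesis is only that $df$ surjects onto $TM/\SD$, so $f$ need not be an immersion and its restriction to the integral curves of $\ell_p=\ker(T_pV\to T_{f(p)}M/\SD_{f(p)})$ need not be a family of horizontal \emph{immersions}; the decomposition you invoke is not available for formal data either. More seriously, even where it is, Proposition \ref{prop:main} provides only $C^0$--control of the curves $\gamma_k$ and no control whatsoever of their derivatives in the parameter directions $k$; the transversality of the assembled map to $\SD$ is carried exactly by those $k$--derivatives, so after replacing the fibres by honest horizontal curves there is no reason the resulting map still solves the relation, nor that it stays compatible with the solution already built on lower handles. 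This coupling between fibrewise horizontality and transversality in the remaining directions is precisely what the microflexibility--plus--Proposition \ref{prop:microflex} machinery is designed to absorb, and your proposal offers no substitute for it. Finally, dismissing the top--dimensional cells for closed $V$ of dimension $2$ or $4$ as ``routine holonomic approximation'' is not justified: holonomic approximation fails on top--dimensional strata of a closed manifold, which is exactly why the fibred $V\times\R$ construction is used in all dimensions at once; and your closing perturbation ``tilting off $\SW$'' addresses a difficulty that simply does not arise once the fibre direction is imposed transverse to $\SW$ from the outset.
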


\textbf{Remark:} If $V$ is $2$--dimensional, the statement amounts to asking for $V$ to be an immersion transverse to $\SD$. The analogous statement for $V$ having subcritical dimension $1$ (or $0$) is already proven in \cite{EM}[Prop. 8.3.2]. 

\textbf{Remark:} assume that the Engel flag $\SW \subset \SD \subset \SE \subset TM$ is orientable. Then, if $V$ is an immersed closed transverse $2$--dimensional manifold, it must be a torus with trivial normal bundle. If we drop the orientability assumption, $V$ can be a Klein bottle as well. 

\textbf{Remark:} the formal data is a mapping $f$ and a formal derivative $F: TV \to TM$ that is surjective onto the quotient $TM/\SD$.

\subsection{The $h$--principle for Diff--invariant, microflexible and locally integrable relations}

Let us explain the main ingredients needed to prove Theorem \ref{thm:transverse}. The interested reader might want to refer to \cite{EM}[Chap. 13].

Fix two manifolds $W^n$ and $M$ and let $\pi: J^r(W, M) \to W$ be the space of $r$--jets from $W$ to $M$. $r$ can take the value $\infty$ or the value $g$, by which we mean germs of maps. A subset $\SR \subset J^r(W, M)$ is called a differential relation.

\begin{definition}
A differential relation $\SR$ is \emph{locally integrable} if, for any $m$, and for any two maps
\[ h: [0,1]^m \to J^r(W, M)  \] 
\[ g_p: \Op(\pi \circ h(p)) \to M \text{, } p \in \Op(\partial[0,1]^m) \]
satisfying $(J^r \circ g_p)(\pi \circ h(p)) = h(p)$, there is 
\[ f_p: \Op(\pi \circ h(p)) \to M \text{, } p \in [0,1]^m \] 
satisfying $(J^r \circ f_p)(\pi \circ h(p)) = h(p)$ for all $p$, and $f_p = g_p$ for all $p \in \Op(\partial[0,1]^m)$.
\end{definition}

That is, $\SR$ is locally integrable if any pointwise differential condition given by $\SR$ can be locally extended to a solution. We introduce the parameter space $[0,1]^m$ to state that this local solvability holds parametrically and relatively as you vary the pointwise condition.

Let us denote $\theta_l = (A = [-1,1]^n, B = \partial([-1,1]^n) \cup ([-1,1]^l \times \{0\}))$.

\begin{definition}
A relation $\SR$ is \emph{microflexible} if, for any small ball $U \subset W$, any $m$, and any maps 
\[ h_p: \theta_l \to U \text{, } p \in [0,1]^m \text{, embeddings}, \]
\[ F_p: \Op(h_p(A)) \to \SR \text { holonomic}, \]
\[ \tilde F^t_p: \Op(h_p(B)) \to \SR  \text{, } t \in [0,1] \text{, holonomic  and satisfying } \tilde F^t_p = F_p \text{ for } p \in \Op(\partial[0,1]^m) \text{ or } t=0, \]
there is, for small $t$, a holonomic family $F^t_p: \Op(h_p(A)) \to \SR$ extending $\tilde F^t_p$, and satisfying $F^t_p = F_p$ if $p \in \Op(\partial[0,1]^m)$ or $t=0$. If the extension exists for all $t$, we say that $\SR$ is \emph{flexible}. 
\end{definition}

That is, being microflexible amounts to proving that local deformations of a solution of the differential solution can be extended to global solutions, as least for small times. Relations that are open are immediately microflexible and locally integrable.

The following proposition \cite[13.5.3]{EM} holds:

\begin{proposition}[Gromov] \label{prop:microflex}
Let $\SR \subset J^r(V \times \R, M)$ be a locally integrable and microflexible relation that is invariant with respect to diffeomorphisms that leafwise preserve the foliation $\coprod \{v\} \times \R$. Then, a $C^0$--close, parametric, relative, and relative to the parameter $h$--principle holds in $\Op(V \times \{0\})$.
\end{proposition}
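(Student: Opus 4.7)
I would prove the proposition by an inductive extension along the skeleta of a triangulation of $V$, using the leaf direction $\R$ as a ``reserve'' that is repeatedly spent (by microflexibility) and then restored (by the Diff-invariance that allows leafwise rescaling). The starting datum is a formal section $F\colon V\to \SR$ defined over $V\times\{0\}$; the aim is a homotopy of $F$, through formal sections, to a section that is holonomic in a neighbourhood $\Op(V\times\{0\})$. Fix a smooth triangulation of $V$ and let $V^{(i)}$ denote its $i$--skeleton. I will construct, by induction on $i$, a homotopy of formal sections and a decreasing sequence $\epsilon_0>\epsilon_1>\cdots>0$ so that at stage $i$ the section is holonomic over $\Op(V^{(i)})\times[-\epsilon_i,\epsilon_i]$, and the homotopy is fixed wherever we were already holonomic and wherever the statement forces it to be fixed (this is how the ``relative'' part enters).

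\textbf{Inductive step.} Assume the holonomic section $F_i$ on $\Op(V^{(i)})\times[-\epsilon_i,\epsilon_i]$ is built. For each $(i+1)$--simplex $\sigma$, local integrability produces, from the formal data on $\sigma\times\{0\}$, a holonomic extension $G_\sigma$ defined on $\Op(\sigma)\times[-\delta,\delta]$ for some small $\delta>0$ depending on $\sigma$. The two holonomic sections $F_i$ and $G_\sigma$ agree (as formal sections) on $\partial\sigma\times\{0\}$, so that near $\partial\sigma\times\{0\}$ they differ by a small deformation of holonomic solutions with fixed 0--jet at $t=0$. Applying microflexibility in the cube model $\theta_l=(A,B)$ with $A=\Op(\sigma)$ and $B=\Op(\partial\sigma)$, I can deform $G_\sigma$ through holonomic sections so that, for small $t\in[0,t_0]$, the deformation agrees with $F_i$ on $\Op(\partial\sigma)\times[-\eta,\eta]$ for some $\eta>0$ much smaller than $\epsilon_i$. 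This matches the two solutions but only over the thin slab $V\times[-\eta,\eta]$. Now invoke leafwise Diff--invariance: choose a diffeomorphism of $V\times\R$ supported in a neighbourhood of $\sigma$ that preserves each leaf $\{v\}\times\R$ and, on $\Op(\sigma)$, stretches $[-\eta,\eta]$ to $[-\epsilon_{i+1},\epsilon_{i+1}]$, with $\epsilon_{i+1}<\epsilon_i$ chosen so that this stretching is the identity outside a slightly larger slab. Pulling back the matched solution along this diffeomorphism yields a holonomic section on $\Op(V^{(i+1)})\times[-\epsilon_{i+1},\epsilon_{i+1}]$ that agrees with $F_i$ near the old skeleton. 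Doing this simplex by simplex (using that simplices of the same dimension can be treated in parallel after disjointing their stars) completes the induction.

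\textbf{Adjectives.} The parametric and relative-in-parameter versions come for free from the fact that microflexibility and local integrability are formulated with the cube of parameters $[0,1]^m$ already built in; at each inductive step one applies these properties to the family indexed by the external parameter, keeping everything fixed over $\Op(\partial[0,1]^m)$. The relative version (in $V$) corresponds to simply not touching simplices that lie in the given closed subset $V_0\subset V$ where the section is already holonomic. For $C^0$--closeness of the resulting map, one observes that both the microflexible deformation (by choosing $t_0$ small) and the leafwise rescaling diffeomorphism (by choosing $\epsilon_{i+1}$ small) can be made to move points by arbitrarily little; a diameter-bound argument on the triangulation then ensures that the final holonomic section is $C^0$--close to the initial formal one when restricted to $V\times\{0\}$.

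\textbf{Main obstacle.} The delicate point is bookkeeping the time losses. Each microflex step shortens the ``reserve'' $\epsilon_i$ in the $\R$ direction, so without the Diff--invariance the induction would collapse: after finitely many stages one would be left with $\epsilon_\ast=0$ and no neighbourhood of $V\times\{0\}$ to live in. The crucial move is therefore the leafwise rescaling, and the subtle verification is that the rescaling diffeomorphisms on neighbouring simplices can be patched without destroying the holonomic extensions already built on the lower skeleton --- this is why one needs to handle each simplex inside a small neighbourhood of $\sigma$ in which the already constructed solution was built to be invariant under such rescalings (formally, one builds the extension on a slightly larger slab $[-\epsilon_i',\epsilon_i']\supset[-\epsilon_{i+1},\epsilon_{i+1}]$ at every step and only modifies inside). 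Once this patching issue is handled, the argument goes through cleanly.
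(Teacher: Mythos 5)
The paper does not prove this proposition at all --- it is quoted from Gromov via Eliashberg--Mishachev \cite[13.5.3]{EM} --- so your sketch can only be compared with the standard argument. Your overall architecture (induction over the skeleta of a triangulation of $V$, local integrability to seed holonomic solutions, microflexibility to glue, and leafwise $\Diff$--invariance to make up for the limited range of microflexibility) is the right one, but the inductive step as written has a genuine gap at its central point: the gluing of $G_\sigma$ to $F_i$ along $\partial\sigma$. You assert that, since the two sections ``agree formally on $\partial\sigma\times\{0\}$'', they differ near $\partial\sigma\times\{0\}$ by a holonomic deformation, and that microflexibility then makes $G_\sigma$ agree with $F_i$ on $\Op(\partial\sigma)\times[-\eta,\eta]$ for small $t$. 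Neither half is justified. Local integrability, as stated, prescribes boundary data in the abstract parameter cube $[0,1]^m$, not along $\partial\sigma\subset V$, so $G_\sigma$ need not even have the jet of $F_i$ along $\partial\sigma\times\{0\}$, and two holonomic germs sharing an $r$--jet along $\partial\sigma\times\{0\}$ are not automatically joined by a holonomic homotopy. More seriously, microflexibility extends a given deformation $F^t$ near $B$ only for $t\in[0,t_0]$ with $t_0$ small: at time $t_0$ your deformed $G_\sigma$ agrees with the intermediate stage $F^{t_0}$ of the connecting deformation, not with $F_i$. So a single small-time application does not achieve the matching you need.

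The second problem is that the leafwise rescaling, as you use it, does not repair this. Restoring the slab thickness in the $\R$--direction addresses a non-issue: for compact $V$ the induction has finitely many steps, so finitely many shrinkings $\epsilon_0>\epsilon_1>\cdots$ would still leave a neighbourhood of $V\times\{0\}$, and the conclusion only concerns a germ near $V\times\{0\}$ anyway. What the invariance must actually overcome is the smallness of $t_0$: this is Gromov's passage from microflexibility to genuine flexibility near a polyhedron of positive codimension, carried out by interlacing repeated small-time applications of microflexibility with ``sharply moving'' diffeotopies --- here, leafwise translations and compressions in the $\R$--direction --- together with an openness/compactness argument in the deformation parameter to terminate; equivalently, one first shows that the restriction maps of the sheaf of solutions near $V\times\{0\}$ are fibrations and then runs the sheaf-theoretic $h$--principle, with local integrability supplying the stalkwise weak homotopy equivalence. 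That step is the heart of the proposition and is absent from your sketch; the thickness bookkeeping you identify as the ``main obstacle'' is not a substitute for it.
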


Saying that the $h$--principle holds means that the space of holonomic sections (sections such that the formal derivatives are the actual derivatives of the zeroeth order map) is weak homotopy equivalent --by the inclusion-- to the space of all sections into $\SR$. Note that, by $C^0$--close it is meant that the zeroeth order components are $C^0$--close, not its derivatives.

\subsection{Proof of Theorem \ref{thm:transverse}}

Let $(M, \SD)$ be an Engel manifold. We claim that the relation $\SR_1$ in $\pi: J^1(\R, M) \to \R$ of being tangent to $\SD$ but transverse to $\SW$ is locally integrable. Suppose we are given maps
\[ h: [0,1]^m \to (\SD \setminus \SW) \subset TM  \] 
\[ g_p: \Op(0) \subset \R \to M \text{, } p \in \Op(\partial[0,1]^m) \]
where the $g_p$ are horizontal curves transverse to $\SW$ satisfying $dg_p(0) = h(p)$. For all $p \in [0,1]^m$ and depending smoothly on $p$, we can extend the vector $h(p)$ to a vector field $H_p$ in $\Op(\pi \circ h(p))$. We can assume that the maps $g_p$ are embeddings by shrinking the domain. Therefore, for those $p \in \Op(\partial[0,1]^m)$, $H_p$ can be assumed to be an extension of the tangent vector $g_p'$. Following the flow of $H_p$ for short times gives the desired local extension of $g_p$.

We claim that $\SR_1$ is microflexible as well. Observe that we only have to consider the case $\theta_0$, which can be phrased as follows. Let $F^0_p: [0,1] \to \SR_1$, $p \in [0,1]^m$, be a family of holonomic maps. Let $F^t_p: \Op(\{0,1\}) \to \SR_1$, $t \in [0,1]$, be a family of deformations defined around the endpoints of the interval. Let $\psi: [0,1] \to \R$ be a bump function which is identically $1$ around $\{0,1\}$ and zero in an arbitrarily large interval in the interior of $[0,1]$. According to Lemma \ref{lem:transverseModel}, the curves $F^0_p$ possess a local model in which they correspond to the zero section in $J^2(\R, \R)$; this implies that, for small $t$, $F^t_p$ is graphical over $F_p^0$ and therefore given by a function $y_p^0$. The extension is given by $\psi y_p^0$ and its derivatives.

Let $V$ be some manifold. Let $\SR_2 \subset J^1(V, M)$ be the open relation of having the formal derivative be surjective onto $TM/\SD$. The relation $\SR_3 \subset J^1(V \times \R, M)$ consists of those maps with formal derivative surjective onto $TM/\SD$ that, along the fibres $\{v\} \times \R$, are tangent to $\SD$ but transverse to $\SW$. Local integrability for $\SR_3$ follows by mimicking the argument for $\SR_1$. 

We claim that $\SR_3$ is also microflexible. Take $\theta_j = (A,B)$. Suppose we are given a holonomic family $F_p^0$ on $A$ and a corresponding deformation $F_p^t$ over $\Op(B)$. Find neighbourhoods $\Op_1(B) \subset \Op_2(B) \subset \Op(B)$ and build a bump function $\psi$ that is $1$ in $\Op_1(B)$ and $0$ outside of $\Op_2(B)$. Since $F_p^t$ is fibrewise graphical over $F_p^0$ for small $t$, we use $\psi$ to interpolate back to $F_p^0$, as above; this can be achieved even if $B$ is embedded wildly with respect to the foliation $\coprod \{v\} \times \R$. For small times the resulting deformation is $C^\infty$--close to $F_p^0$, so in particular it is still surjective onto $TM/\SD$ in the transverse direction.

By construction, $\SR_3$ is invariant under diffeomorphisms preserving the foliation $\coprod \{v\} \times \R$ leafwise. Then, Proposition \ref{prop:microflex} allows us to conclude that in $\Op(V \times \{0\})$ a complete $h$--principle holds, so in particular a complete $h$--principle holds in $V$ for the relation $\SR_2$. \hfill{$\Box$}

\textbf{Remark:} Observe that we did not need the $h$--principle for tangent immersions in Theorem \ref{thm:main}, instead we just checked the much more simple properties of being microflexible and locally integrable for the relation $\SR_1$.

\subsection{Immersed 3--dimensional submanifolds}

The reader might have noticed that the most interesting case for a transverse submanifold was left out: codimension 1. Inspecting the proof presented in the previous subsection, it is clear that it cannot possibly go through, since immersions $V^3 \times \R \to M$ cannot avoid the $\SW$--direction, which was a key ingredient in the 2--dimensional case to obtain microflexibility. Still, the following result holds:

\begin{proposition} \label{prop:3immersions}
Let $(M, \SD)$ be an Engel manifold. Let $V$ be an arbitrary $3$--manifold. Then, immersions $V \to M$ that are transverse to $\SD$ satisfy a $C^0$--close, relative, relative to the parameter $h$--principle.
\end{proposition}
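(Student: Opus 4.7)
The plan is to follow the scheme of Theorem~\ref{thm:transverse} with one essential modification. As before, I would work with an auxiliary relation $\SR \subset J^1(V \times \R, M)$ consisting of those jets whose formal derivative surjects onto $TM/\SD$ and whose fibre component (tangent to $\{v\} \times \R$) lies in $\SD \setminus \SW$. This relation is $\Diff$--invariant with respect to fibre--preserving diffeomorphisms of the foliation $\coprod \{v\}\times\R$, and the goal is to apply Proposition~\ref{prop:microflex} to obtain the $h$--principle on $\Op(V \times \{0\})$; restricting to $V \times \{0\}$ then yields the desired statement. The subtlety in this critical dimension is that $\dim V \times \R = \dim M = 4$, so holonomic sections of $\SR$ are local diffeomorphisms and the fibre direction is not ``extra'' — this is precisely what prevents us from concluding microflexibility by the pure bump--function argument of Theorem~\ref{thm:transverse}.

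To extract a formal section of $\SR$ from a formal transverse immersion $(f, F : TV \to TM)$, observe that the line $\ell' := F(TV) \cap \SD$ is well defined by rank considerations, and a small $C^\infty$--perturbation of $F$ makes $\ell'$ pointwise distinct from $\SW$. One then extends $F$ to $T(V \times \R)$ by choosing $F(\partial_t)$ to be any smooth section of $\SD \setminus (\SW \cup \ell')$ along $f(V)$, which is possible because $\SD$ is a $2$--plane field and we are only removing two lines at each point. Local integrability of $\SR$ is then verified, exactly as in Theorem~\ref{thm:transverse}, by Lemma~\ref{lem:transverseModel} together with standard extension arguments.

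The main step is microflexibility. Given a holonomic family $F^0_p : \theta_l \to \SR$ and a deformation $\tilde F^t_p$ on $\Op(\partial \theta_l)$, I would restrict to each fibre $\{v\} \times \R$: the family of curves $s \mapsto F^0_p(v,s)$ is horizontal and transverse to $\SW$, hence lies in $\HI^\niso$, and $\tilde F^t_p$ gives a deformation of these curves specified on their endpoints. This is exactly the input to Proposition~\ref{prop:main}, which supplies the extension to a genuine parametric deformation of horizontal curves $C^0$--close to the originals, relative to the fibre endpoints and to the boundary of the parameter space. Since the condition that the full derivative surjects onto $TM/\SD$ is open, the $C^0$--small extension automatically preserves transversality after a possibly finer cover, completing the verification of microflexibility. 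An invocation of Proposition~\ref{prop:microflex} then gives the $h$--principle for $\SR$ in $\Op(V \times \{0\})$.

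The hard part is precisely this microflexibility step, which is where Theorem~\ref{thm:main} enters essentially: in the $2$--dimensional setting of Theorem~\ref{thm:transverse}, fibres could be kept graphical over the original solution and the deformation interpolated back via a bump function, but here the fibre direction contributes to the rank of the map and such an interpolation does not produce a holonomic extension of $\SR$. The parametric, relative, $C^0$--close $h$--principle of Theorem~\ref{thm:main} is exactly the tool needed to genuinely deform families of horizontal curves with prescribed boundary behaviour, supplying the microflexibility that the elementary trick cannot provide.
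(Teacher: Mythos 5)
Your proposal keeps the relation at the $1$--jet level and demands that the fibre component lie in $\SD \setminus \SW$; this is exactly the naive transcription of the two--dimensional relation that the paper's introductory remark to this subsection rules out, and the two places where your argument breaks are consequences of that choice. First, the formal--data step: $\ell' = F(TV) \cap \SD$ and $\SW$ are two sections of the $\R\PP^1$--bundle $\PP(\SD)$ pulled back to the $3$--manifold $V$, so their coincidence locus is cut out by one equation and is generically a \emph{surface} in $V$; no $C^\infty$--small perturbation of $F$ can make $\ell'$ pointwise distinct from $\SW$. Worse, a section of $\SD \setminus \SW$ along $f$ need not exist at all (it forces a nonvanishing section of the line bundle $f^*(\SD/\SW)$), and even when it exists the fibrewise space of admissible vectors $\SD \setminus (\SW \cup \ell')$ is homotopy equivalent to a finite set, so the passage from formal transverse immersions of $V$ to formal sections of your auxiliary relation is neither always possible nor a weak homotopy equivalence; the relative and parametric aspects of the statement are already lost here. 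The paper avoids precisely this by putting the relation in the space of \emph{germs}: the fibre condition is membership in $\HI^\gen(\R,\SD)$ (horizontal and not locally an orbit of $\SW$), which is not a condition on any finite jet, and its $1$--jet shadow $\SR^1$ only requires a nonzero fibre vector in $\SD$, with $\SR \to \SR^1$ a Serre fibration with contractible fibres.

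Second, the microflexibility step is inconsistent with your own relation. Proposition \ref{prop:main} (and Theorem \ref{thm:main}) produce families in $\HI^\niso$, i.e.\ horizontal curves that are \emph{somewhere} transverse to $\SW$ but in general have $\SW$--tangencies --- by Theorem \ref{thm:generic} these tangencies are unavoidable, appearing along a codimension--one set in families --- so the extension you obtain is not a homotopy through sections of a relation that demands everywhere--transversality to $\SW$ along the fibres, and microflexibility of your $\SR$ is not established. Two further mismatches: in the microflexibility problem the set $B = \partial A \cup ([-1,1]^l \times \{0\})$ can meet the fibres in interior sub--intervals positioned wildly with respect to the foliation $\coprod\{v\}\times\R$, whereas Proposition \ref{prop:main} prescribes the deformation only near the endpoints $B \subset \{0,1\}$; and Proposition \ref{prop:main} controls the curves only in $C^0$, which says nothing about the derivatives in the $V$--directions, so ``openness of transversality plus $C^0$--smallness'' does not by itself preserve transversality of the slices $V\times\{s\}$ to $\SD$. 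This is why the paper works with the fibre condition $\HI^\gen$, which is stable under the deformations supplied by the $h$--principle for horizontal curves; that choice is what lets Theorem \ref{thm:main} be used for microflexibility, rather than being invoked for a relation it cannot feed.
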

\begin{proof}
Let us make a few remarks that will shed some light on how to proceed. Consider the space of horizontal curves:
\[ \HI^\gen(\SD) = \{ \gamma \in \HI^\niso(\SD) \text{ such that $\gamma$ is not tangent to $\SW$ identically in a neighbourhood of a point}\}. \]
Theorem \ref{thm:generic} homotopes any compact family in $\HI^\niso(\SD)$ to lie in $\HI^\gen(\SD)$ (and any compact family in $\HI^\gen(\SD)$ to one having nice singularities). Its relative nature readily implies that the inclusion $\HI^\gen(\SD) \to \HI^\niso(\SD)$ is a weak homotopy equivalence. Observe that $\HI^\gen(\SD)$ is a nicer space than $\HI^\niso(\SD)$ in the sense that the curves it contains are defined by a local condition. Define $\HI^\gen(\R, \SD)$ analogously for maps defined over $\R$. 

Let $V$ be a $3$--manifold. Define the following differential relation $\SR \subset J^g(V \times \R, M)$: germs that are transverse to $\SD$ along $V \times \{s\}$ and lie in $\HI^\gen(\R, \SD)$ along $\{v\} \times \R$. There exists an obvious projection $J^g(V \times \R, M) \to J^1(V \times \R, M)$ and the image of $\SR$ is the relation $\SR^1$: maps with formal differential transverse to $\SD$ along $V \times \{s\}$ and tangent to $\SD$ along $\{v\} \times \R$. $\SR \to \SR^1$ is a Serre fibration with contractible fibre.


The proof of Proposition \ref{prop:3immersions} amounts to showing that $\SR$ is microflexible and locally integrable and then applying Proposition \ref{prop:microflex}. The full h--principle for $\HI^\gen(\SD)$ and the openess of the transverse immersion condition in codimension $1$ imply microflexibility and the local integrability is tautological. The claim follows. 
\end{proof}

\end{document}